\begin{document}
    
\newtheorem{theorem}{Theorem}[section]
\newtheorem{lemma}[theorem]{Lemma}
\newtheorem{corollary}[theorem]{Corollary}
\newtheorem{conjecture}[theorem]{Conjecture}
\newtheorem{proposition}[theorem]{Proposition}

\newtheorem{definition}[theorem]{Definition}
\newtheorem{example}[theorem]{Example}
\newtheorem{xca}[theorem]{Exercise}

\definecolor{red}{rgb}{1,0,0}
\definecolor{blue}{rgb}{.2,.2,.8}
\newcommand{\ccom}[1]{{\color{teal}{ #1}}}

\newtheoremstyle{remark}
    {\dimexpr\topsep/2\relax} 
    {\dimexpr\topsep/2\relax} 
    {}          
    {}          
    {\bfseries} 
    {.}         
    {.5em}      
    {}          

\theoremstyle{remark}
\newtheorem{remark}{Remark}[section]

\allowdisplaybreaks

\makeatletter
\@namedef{subjclassname@2020}{%
  \textup{2020} Mathematics Subject Classification}
\makeatother

\title{Lambert series and double Lambert series}

\begin{abstract}
We consider relationships between classical Lambert series, multiple Lambert series and classical $q$-series of the Rogers-Ramanujan type. We conclude with a contemplation on the Andrews-Dixit-Schultz-Yee conjecture.    
\end{abstract}

\author{T. Amdeberhan}
\address{Department of Mathematics\\ Tulane University\\ New Orleans, LA 70118, USA}
\email{tamdeber@tulane.edu}

\author{G. E. Andrews}
\address{Department of Mathematics\\ Penn State University\\ University Park, PA 16802, USA}
 \email{gea1@psu.edu} 

\author{C. Ballantine}
\address{Department of Mathematics and Computer Science\\ College of the Holy Cross \\ Worcester, MA 01610, USA \\} 
\email{cballant@holycross.edu} 


\subjclass[2020]{Primary 11P81; Secondary 05A17.}

\keywords{Lambert series, double Lambert series, Dirichlet series, Eisenstein series}

\maketitle

\section{introduction}

J.H. Lambert \cite{L} showed in 1771 that the generating function for the divisor function is given by
                          $$\sum_{n\geq1} \frac{q^n}{1 - q^n}.$$
More generally, we now define a Lambert series to be the series  given by 
$$\sum_{n\geq1} \frac{a_nq^n}{1-q^n}.$$
In particular, when $a_n=n^{2k-1}$, the resulting functions are either modular forms or (when $k=1$) a quasimodular form. A nice account of much that is known about Lambert series identities may be found in Schmidt \cite{S}.

While we shall consider classical Lambert series in this paper, we will also be extending our work to series of the form
$$\sum_{n\geq1} R_n(q^n,q),$$
where $R_n(x,y)$ is a rational function of $x$ and $y$. We call such series \emph{general Lambert series}. We note that our general Lambert series contain the generalized Lambert series considered by Schmidt \cite{S} and others.

In addition, we shall also explore series of the type
$$\sum_{n\geq1} S_{n,m}(q^n,q^m,q),$$
where $S_{n,m}(x,y,z)$ is a rational function of $x, y$ and $z$. We shall call such series \emph{double Lambert series}.

In Section 2, we gather background material. In Section 3, we prove identities connecting general and double Lambert series. In Section 4, we consider a connection of Lambert series to Rogers-Ramanujan type identities. It was previously shown \cite[eq'n (4.7)]{Andrews5} that
$$\sum_{n\geq1} \frac{(q;q)_n(q,q)_{n-1}q^{n^2}}{(q;q)_{2n}}
=\sum_{n\geq1}  \frac{(\frac{n}3)q^n}{1-q^n}$$
where $(\frac{n}p)$ is the Legendre symbol and 
$$(a;q)_n:=\prod_{j=0}^{n-1}(1-aq^j).$$
It is natural to ask if there are other such identities. To our surprise, there is a very similar result in which $(\frac{n}3)$ is replaced by $(\frac{n}5)$:
$$\sum_{n\geq1} \frac{(q;q)_n(q,q)_{n-1}(-1)^{n-1}q^{\binom{n+1}2}}{(q;q)_{2n}}
=\sum_{n\geq1}  \frac{(\frac{n}5)q^n}{1-q^n}.$$
Section 5 concludes with a careful albeit inconclusive consideration of the Andrews-Dixit-Schultz-Yee (ADSY) conjecture \cite{Andrews2} that
$$\sum_{m,n\geq1} \frac{q^{2mn+n}}{(1+q^n)(1-q^{2m-1})}$$
is an odd function of $q$. It would be a major step forward in the study of multiple Lambert series if one could develop methods that would not only prove this conjecture but also place it in the context that would reveal and prove similar results. While we have been unsuccessful in proving the ADSY conjecture, we believe our exploration points in the right direction.

\section{Background and notation}

A {\it partition}  of $n\in\mathbb{N}$ is a finite non-increasing sequence of positive integers that add up to $n$. We use the notation $\lambda=(\lambda_1,\lambda_2,\dots,\lambda_{\ell})$, with $\lambda_1\geq\lambda_2\geq \ldots \geq \lambda_\ell$ and 
$\lambda_1+\lambda_2+\cdots+\lambda_{\ell}=n$. We refer to the numbers $\lambda_i$ as the {\it parts} of $\lambda$. The number of parts in $\lambda$ is denoted by $\ell(\lambda)$ and it is called the \textit{length} of $\lambda$. We denote by $\mathcal P$ the set of all partitions.  For convenience, we set $\lambda_j=0$ if $j>\ell(\lambda)$.

Graphically, a partition $\lambda=(\lambda_1,\lambda_2,\dots,\lambda_{\ell})$ can be represented by a Young diagram, a left-justified array of boxes with $\lambda_i$ boxes in the $i$th row. For example, the Young diagram of $\lambda=(4,4,2,2,1)$ is 

$$\tiny\ydiagram{4,4,2,2,1}$$

The conjugate of a partition $\lambda$, denoted by $\lambda'$, is the partition whose Young diagram  is obtained from the Young diagram of $\lambda$ by reflection across the diagonal. For example, the conjugate of $\lambda=(4,4,2,2,1)$ is $\lambda'=(5,4,2,2)$. 

We also use the frequency notation for partitions. In particular, by $(k^j)$ we mean the partition of length $j$ with all parts equal to $k$.

The Frobenius symbol, introduced in 1900 by Frobenius \cite{Frobenius}, is defined as follows. Given a partition $\lambda$, let $r$ be the size of the largest square that fits inside the Young diagram of $\lambda$. This square is referred to as the Durfee square of $\lambda$. For example, the Durfee Square of $\lambda=(4,4,2,2,1)$ has size $2$. For $j$ such that $1\leq j\leq r$ let $a_j$, respectively $b_j$, be the number of boxes directly to the right, respectively below, the box in position $(j,j)$ in the Young diagram of $\lambda$. The Frobenius symbol of $\lambda$ is denoted by
$$(a_1,\dots,a_r\, \vert\,  b_1,\dots,b_r).$$
For example, the Frobenius symbol of $\lambda=(4,4,2,2,1)$ is $(3,2 \, \vert \, 4,2)$.

The Pochhammer symbol is defined by \begin{align*}
& (a;q)_n = \begin{cases}
1, & \text{for $n=0$,}\\
(1-a)(1-aq)\cdots(1-aq^{n-1}), &\text{for $n>0$;}
\end{cases}\\
& (a;q)_\infty = \lim_{n\to\infty} (a;q)_n.
\end{align*}
We assume $q$ is a complex number with $|q| < 1$ so that all infinite series and products converge absolutely.

\section{Three Lambert series} \label{S1}

\noindent
We begin by considering the following series
\begin{equation} \label{f1} f_1(q):=\sum_{k\geq1} \left(\frac{k(k-1)q^k}{1-q^k}-\frac{2kq^{2k}}{(1-q^k)^2}\right).\end{equation}
The next lemma gives the Fourier expansion of $f_1(q)$. Throughout the article, whenever we refer to divisors of a positive integer, we only consider the positive divisors. The algebraic proof of the lemma is straightforward. We give a combinatorial proof using integer partitions.

\begin{lemma}
$$f_1(q)=\sum_{n\geq1} \left(\sum_{d\vert n} (d^2+d-2n)\right)q^n.$$
\end{lemma}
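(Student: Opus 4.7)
The plan is to compare the coefficient of $q^n$ on both sides. The author indicates both an algebraic proof and a combinatorial one; I would begin with the algebraic derivation, since it is the most direct, and then sketch how one might upgrade it to a partition-based bijection.

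First I would expand each Lambert summand as a power series, using $\frac{q^k}{1-q^k}=\sum_{m\geq 1}q^{km}$ and $\frac{q^{2k}}{(1-q^k)^2}=\sum_{m\geq 2}(m-1)q^{km}$ (the latter coming from $\frac{1}{(1-x)^2}=\sum_{j\geq 0}(j+1)x^j$ after re-indexing). Thus
\[
f_1(q)=\sum_{k\geq 1}k(k-1)\sum_{m\geq 1}q^{km} \;-\; \sum_{k\geq 1}2k\sum_{m\geq 1}(m-1)q^{km}.
\]
Interchanging the order of summation and writing $n=km$, $d=k$, the coefficient of $q^n$ becomes
\[
\sum_{d\mid n}\bigl[d(d-1)-2d(n/d-1)\bigr]=\sum_{d\mid n}\bigl(d^2-d-2n+2d\bigr)=\sum_{d\mid n}(d^2+d-2n),
\]
which is exactly the right-hand side.

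For the combinatorial refinement via integer partitions, the natural objects are rectangular partitions $(d^{n/d})$ of $n$, one per divisor $d\mid n$. The factor $\frac{q^k}{1-q^k}$ is the generating function for nonempty rectangles with each part equal to $k$, while $\frac{q^{2k}}{(1-q^k)^2}=\bigl(\frac{q^k}{1-q^k}\bigr)^2$ enumerates ordered pairs of such rectangles. I would try to read $k(k-1)$ as a choice of an ordered pair of distinct columns in a width-$k$ rectangle, and $2k$ as a column choice with a two-fold orientation, and then build a sign-reversing involution on these decorated objects that cancels most contributions and leaves exactly $d^2+d-2n$ at each divisor-indexed rectangle.

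The main obstacle is that $d^2+d-2n$ can be negative (whenever $d$ is small relative to $n$), so the bijective story must be signed rather than strictly weight-preserving. Designing the involution so that it respects the divisor $d$—rather than mixing contributions across rectangles of different widths—is the delicate step, and matching fixed points to the residual count is usually where such arguments either succeed cleanly or collapse into case analysis. If a direct involution proves hard to find, conjugation of partitions (which pairs $(d^{n/d})$ with $((n/d)^d)$) may provide the symmetry needed, and I would always have the algebraic derivation above as an unconditional fallback.
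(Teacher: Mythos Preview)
Your algebraic derivation is correct and is exactly what the paper alludes to as the ``straightforward'' proof: expand each Lambert summand, collect by $n=km$, and read off $\sum_{d\mid n}(d^2+d-2n)$.

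Where you diverge from the paper is in the combinatorial refinement. You propose interpreting $k(k-1)$ as a choice of an ordered pair of distinct columns and then building a sign-reversing involution, worrying (rightly) that $d^2+d-2n$ can be negative. The paper avoids all of this by a cleaner bookkeeping device: it interprets $\dfrac{2kq^{2k}}{(1-q^k)^2}$ not as ordered pairs of rectangles but as a single rectangle with all parts equal to $k$ whose rows are two-colored, with at least one row of each color, weighted by $2\lambda_1=2k$. For a fixed divisor $k$ of $n$ with $n=kj$, there are exactly $j-1$ such two-colored rectangles (choose how many of the $j$ rows get color~2), contributing $2k(j-1)=2n-2k$ to the subtracted piece; the first piece contributes $k(k-1)$; and the difference is $k^2+k-2n$ on the nose. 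No involution, no cancellation across divisors, no case analysis. Your approach would eventually work, but the paper's grouping shows that the signed residual is already visible divisor-by-divisor once you read $\bigl(\frac{q^k}{1-q^k}\bigr)^2$ as a two-coloring of a single rectangle rather than as two separate rectangles.
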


\begin{proof}[Combinatorial proof]
Let $\mathcal B(n)$ be the set of partitions of $n$ with all parts equal (i.e., rectangular partitions) and let $\mathcal B^*(n)$ be the the set of partitions of $n$ with parts in two colors $1$ and $2$, all of the same size, and at least one part of each color. The order of colored positive integers is $1_1<1_2<2_1<2_2<\cdots $. 

Let $$a(n):=\sum_{\lambda\in \mathcal B(n)}\lambda_1(\lambda_1-1)$$ and $$b(n):=\sum_{\lambda\in \mathcal B^*(n)}2\lambda_1.$$
The function $f_1(q)$ defined by \eqref{f1} is the generating function for the sequence $a(n)-b(n)$. 

\smallskip
\noindent
To each partition $\lambda=(k^j)\in \mathcal B(n)$ we associate $j-1$ partitions in $\mathcal B^*(n)$. For each $1\leq i\leq j-1$ we color the first $i$ parts of $\lambda$ in color $2$ and the remaining $j-i$ parts in color $1$ to obtain a partition $\lambda^{(i)}\in \mathcal B^*$. Then $\lambda$ contributes $k(k-1)=k^2-k$ to $a(n)$ and the partitions $\lambda^{(i)}$, $1\leq i\leq j-1$ contribute $2k(j-1)=2(kj-k)=2n-2k$ to $b(n)$. Hence, each divisor $k$ of $n$ contributes $k^2-k-2n+2k=k^2+k-2n$ to $a(n)-b(n)$. 
\end{proof}

\bigskip
\noindent
Next, we introduce two new $q$-series that turn out to be equal to $f_1(q)$. Let
\begin{align*}
f_2(q)&:=\sum_{k\geq1}\sum_{\ell\geq1}
\left(\frac{(k+\ell)\,q^{k+\ell+k \ell}}{(1-q^k)(1-q^{\ell})}\right)\end{align*} and
\begin{align*} f_3(q)&:=\sum_{k\geq1}\sum_{\ell>k}
\left(\frac{2k\,q^{k+\ell}}{(1-q^k)(1-q^{\ell})}\right).
\end{align*}

We first prove, both analytically and combinatorially, that the two new series are equal to each other. 
\begin{theorem} We have $f_2(q)=f_3(q)$.
\end{theorem}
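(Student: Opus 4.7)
The plan is to reduce both $f_2$ and $f_3$ to the same factored double sum via a short sequence of formal manipulations: a symmetry swap, one geometric-series expansion, and an interchange of summation order. First I would exploit the $(k,\ell)$-symmetry of the summand in $f_2$. Splitting the weight as $k+\ell$ and swapping the dummy variables in the ``$\ell$''-piece gives
$$f_2(q)=2\sum_{k,\ell\ge1}\frac{k\,q^{k+\ell+k\ell}}{(1-q^k)(1-q^\ell)}.$$
Next, for fixed $k$, I would rewrite $q^{\ell+k\ell}=q^{(k+1)\ell}$ and expand $q^{(k+1)\ell}/(1-q^\ell)=\sum_{m\ge k+1}q^{m\ell}$ as a geometric series; interchanging the order of summation (legal by absolute convergence for $|q|<1$) then collapses the $\ell$-sum to
$$\sum_{\ell\ge1}\frac{q^{(k+1)\ell}}{1-q^\ell}=\sum_{m>k}\frac{q^m}{1-q^m}.$$
Substituting back yields $f_2(q)=2\sum_{k\ge1}\frac{kq^k}{1-q^k}\sum_{m>k}\frac{q^m}{1-q^m}$, which, after renaming $m$ as $\ell$, is exactly the factored form of $f_3(q)$.

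Since the paper promises both an analytic and a combinatorial proof, I would also provide a bijection. Interpret the $f_2$-summand indexed by $(k,\ell,a,b)$ as a pair of rectangular partitions $((k^a),(\ell^b))$ together with an auxiliary $k\times\ell$ rectangle, weighted by $k+\ell$; interpret the $f_3$-summand as a pair $((k^a),(\ell^b))$ with $k<\ell$, weighted by $2k$. Split the $f_2$-weight as $k+\ell$: on the $k$-side, absorb the rectangle into the second partition as $k$ extra parts of size $\ell$, then conjugate, producing the pair $((k^a),((b+k)^\ell))$ in which the smaller part is forced to be $k$ and which contributes weight $k$. The $\ell$-side is treated symmetrically. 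A quick check shows that each $f_3$-pair with smaller part $k_0$ is hit exactly once from each of the two sides, so the two contributions combine to the required weight $2k_0$.

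The analytic route has essentially no obstacle beyond the correct index shift $m=k+1+j$ in the geometric-series step. The combinatorial side needs only the small verification that, after conjugation, the smaller part of the resulting pair equals the original $k$ (respectively $\ell$), guaranteeing that the two absorption directions produce distinct $f_2$-preimages and jointly exhaust the $f_3$-pairs.
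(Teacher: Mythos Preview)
Your analytic argument is correct and is essentially the paper's algebraic proof run in the opposite direction: the paper expands $f_3$ as $\sum_{i,j,k,\ell\ge1}2k\,q^{k\ell+ki+\ell j}$, symmetrises to replace $2k$ by $k+\ell$, and reads off $f_2$; you start from $f_2$, symmetrise first, and then collapse the geometric series to reach $f_3$. The index shift you flag ($m=k+1+j$) is exactly the substitution $\ell=k+m$ the paper uses.

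Your combinatorial argument is also valid, but it is genuinely different from the paper's. The paper interprets both series in terms of the set $\mathcal C(n)$ of partitions with exactly two distinct part sizes: $f_3$ generates $2c(n)$ where $c(n)$ is the sum of smallest parts over $\mathcal C(n)$, while $f_2$ generates $c(n)+d(n)$ where $d(n)$ is the sum of multiplicities of the largest part; a single stroke of conjugation on $\mathcal C(n)$ gives $c(n)=d(n)$. Your version instead builds an explicit two-to-one weighted correspondence on quadruples $(k,\ell,a,b)$, which is really the algebraic index manipulation dressed in rectangle language. Both approaches are fine; the paper's is shorter and more conceptual, while yours has the merit of being explicitly bijective at the level of individual monomials. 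One small point worth making explicit in your write-up: when the two preimages of a given $f_3$-pair happen to coincide as quadruples (namely when $k_0=b_0$ and $a_0=\ell_0-k_0$), the weight $k+\ell=2k_0$ of that single quadruple is still being split as $k_0+k_0$, so the count is correct.
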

\begin{proof}[Algebraic proof] Rearranging and using properties of geometric series, we obtain
\begin{align*}  f_3(q) &= \sum_{\ell>k\geq1}\sum_{i,j\geq1} 2kq^{ki+\ell j} = \sum_{m,k\geq1}\sum_{i,j\geq1} 2kq^{ki+kj+mj}   \\
&  =\sum_{i,j,k,\ell\geq1} 2kq^{k\ell+ki+\ell j} = \sum_{i,j,k,\ell\geq1} (k+\ell)q^{k\ell+ki+\ell j}  = f_2(q).
\end{align*}
The penultimate equality is valid due to the symmetry in $k$ and $\ell$.
The proof is complete. 
\end{proof}

\begin{proof}[Combinatorial proof] First we interpret $f_2(q)$ and $f_3(q)$ as partition generating functions.

\noindent
Let $\mathcal C(n)$ be the set of partitions $\lambda$ of $n$ with exactly two different part sizes. For example $$\mathcal C(6)=\{(5,1), (4, 2), (4,1,1), (3, 1,1,1), (2, 2,1,1), (2, 1,1,1,1)\}.$$ 
Let $c(n)$ be the sum of smallest parts in all partitions in $\mathcal C(n)$ (without multiplicity) and let $d(n)$ be the sum of the multiplicities of the largest part in all partitions in $\mathcal C(n)$. In the above example, $c(6)=1+2+1+1+1+1= 7$ and $d(6)=1+1+1+1+2+1=7$. 

\smallskip
\noindent
The function $$f_2(q)=\sum_{k\geq1}\sum_{\ell\geq1}
\left((k+\ell)\, q^{kl}\frac{q^{k}}{1-q^k}\frac{q^{\ell}}{1-q^{\ell}}\right)$$ is the generating function for the sequence $c(n)+d(n)$  
and the function  $$f_3(q)=\sum_{k\geq1}\sum_{\ell>k}
\left(2k\, \frac{q^{k}}{1-q^k}\frac{q^{\ell}}{1-q^{\ell}}\right)$$ is the generating function for the sequence $2c(n)$.  

\smallskip

\noindent
Since the conjugate of any partition in $\mathcal C(n)$ is in $\mathcal C(n)$, and by conjugation $c(n)=d(n)$, it follows that $f_2(n)=f_3(n)$. 
\end{proof}

Next, we prove that the new $q$-series $f_2(q)$ and $f_3(q)$ are equal to $f_1(q)$. 
First we prove a helpful lemma. 

\begin{lemma} \label{lemma_f1_f3} We have \begin{equation}\label{Liouville} \sum_{k\geq1}\sum_{\ell>k} \frac{k q^{\ell}}{(1-q^k)(1-q^{\ell})}= \sum_{k\geq1} \frac{k^2 q^k}{1-q^k} - \sum_{k\geq1} \frac{kq^k}{(1-q^k)^2}.\end{equation}
\end{lemma}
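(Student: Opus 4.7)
My plan is to verify the identity by computing the coefficients of $q^n$ on both sides.

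For the right-hand side, elementary Lambert-series expansions give
\[
\sum_{k\ge 1}\frac{k^2q^k}{1-q^k}=\sum_{k,j\ge 1}k^2q^{kj}=\sum_{n\ge 1}\sigma_2(n)q^n, \qquad \sum_{k\ge 1}\frac{kq^k}{(1-q^k)^2}=\sum_{k,j\ge 1}kj\,q^{kj}=\sum_{n\ge 1}n\tau(n)q^n,
\]
where $\sigma_2(n)=\sum_{d\mid n}d^2$ and $\tau(n)$ is the number of divisors of $n$. Hence $[q^n]\mathrm{RHS}=\sigma_2(n)-n\tau(n)$.

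For the left-hand side, expanding $\frac{q^\ell}{(1-q^k)(1-q^\ell)}=\sum_{a\ge 0,\,b\ge 1}q^{ak+b\ell}$ yields
\[
[q^n]\mathrm{LHS}=\sum_{\substack{\ell>k\ge 1\\a\ge 0,\,b\ge 1\\ak+b\ell=n}}k.
\]
I would split this sum according to whether $a=0$ or $a\ge 1$. The case $a=0$ forces $\ell\mid n$, $b=n/\ell$, and $k\in\{1,\ldots,\ell-1\}$, contributing $\sum_{\ell\mid n}\binom{\ell}{2}=\tfrac{1}{2}(\sigma_2(n)-\sigma(n))$. The case $a\ge 1$ parameterises partitions $\lambda=(\ell^b,k^a)\in\mathcal C(n)$ with exactly two distinct part sizes $k<\ell$; the weight $k$ is the smallest part of $\lambda$, so this contribution is the statistic $c(n)$ introduced in the combinatorial proof of $f_2=f_3$ above. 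The lemma therefore reduces to the divisor identity
\[
c(n)=\tfrac{1}{2}\bigl(\sigma_2(n)+\sigma(n)\bigr)-n\tau(n)=\sum_{d\mid n}\biggl(\binom{d+1}{2}-n\biggr).
\]

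The main obstacle is establishing this closed form for $c(n)$. My approach is to compute the generating function $\sum_n c(n)q^n=\sum_k\tfrac{kq^k}{1-q^k}\sum_{\ell>k}\tfrac{q^\ell}{1-q^\ell}$ directly: the telescoping identity $\sum_{\ell>k}\tfrac{q^\ell}{1-q^\ell}=\sum_{m\ge 1}\tfrac{q^{m(k+1)}}{1-q^m}$ turns it into $\sum_{k,m\ge 1}\tfrac{kq^{k+m(k+1)}}{(1-q^k)(1-q^m)}$, from which the coefficient of $q^n$ can be extracted and regrouped against the Lambert series for $\sigma_2$, $\sigma$, and $n\tau$. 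As a consistency check, the conjugation bijection $\lambda\mapsto\lambda'$ on $\mathcal C(n)$ used in the $f_2=f_3$ proof gives $c(n)=d(n)$, producing the symmetric expression $2c(n)=\sum_{\lambda\in\mathcal C(n)}(k+b)$ for $\lambda=(\ell^b,k^a)$, which offers an independent route to the same closed form. Once $c(n)$ is in closed form, combining with the $a=0$ piece completes the proof.
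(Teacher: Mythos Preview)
Your coefficient extraction on both sides is correct, and the split of the left-hand side into the $a=0$ piece (giving $\tfrac12(\sigma_2(n)-\sigma(n))$) and the $a\ge 1$ piece (giving $c(n)$) is clean. The difficulty is that the step you flag as ``the main obstacle''---the closed form
\[
c(n)=\tfrac12\bigl(\sigma_2(n)+\sigma(n)\bigr)-n\tau(n)
\]
---is not merely an obstacle: it is equivalent to the lemma itself. Indeed, subtracting your $a=0$ contribution from the identity to be proved leaves exactly the generating-function identity $\sum_n c(n)q^n=\sum_k\binom{k+1}{2}\frac{q^k}{1-q^k}-\sum_k\frac{kq^k}{(1-q^k)^2}$, which (after the elementary rewriting $\frac{kq^k}{(1-q^k)^2}=\frac{kq^k}{1-q^k}+\frac{kq^{2k}}{(1-q^k)^2}$) is precisely the statement $f_2(q)=f_1(q)$. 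So your reduction has only peeled off an easy boundary term.

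Your proposed attack on $c(n)$ is circular. The telescoping rewrite $\sum_{\ell>k}\frac{q^\ell}{1-q^\ell}=\sum_{m\ge 1}\frac{q^{m(k+1)}}{1-q^m}$ is correct and produces $\sum_{k,m\ge 1}\frac{kq^{k+m(k+1)}}{(1-q^k)(1-q^m)}=\sum_{a,b,k,m\ge 1}k\,q^{ak+mk+bm}$, but if you now set $\ell=k+b>k$ this collapses back to $\sum_{k<\ell,\,a,m\ge 1}k\,q^{ak+m\ell}$, i.e.\ to the generating function for $c(n)$ you started from. The $(k,a)\leftrightarrow(m,b)$ symmetry you would naturally exploit in the four-fold sum just reproduces $c(n)=d(n)$, which you already know. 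Nothing in the proposal supplies the missing non-trivial input.

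The paper's proof is genuinely different: it does not compare coefficients at all. Instead it shows that both sides of the lemma equal the triple sum $\sum_{m,n\ge 1}\frac{q^{m+n}}{(1-q^m)(1-q^n)(1-q^{m+n})}$, invoking a five-variable lattice-box identity for the right-hand side and, crucially, Bell's identity
\[
\sum_{k\ge 1}\frac{q^k}{(1-q^k)^2}\sum_{n=1}^k\frac{1}{1-q^n}=\sum_{m\ge 1}\frac{m^2 q^m}{1-q^m}
\]
to close the gap on the left-hand side. This identity (or something of comparable strength) is the real content of the lemma; your proposal does not contain a substitute for it.
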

\begin{proof}  It follows from \cite[equation (2.1)]{Andrews} that the right hand side of \eqref{Liouville} equals \begin{equation}\label{LR}\sum_{\substack{x_1,x_2,x_4,x_5\geq 1,  {x_3}\geq0}} 
q^{x_1x_2+x_2{x_3}+{x_3}x_4+x_4x_5}.\end{equation}
We show that the $q$-series \eqref{LR} is also equal to the left hand side of \eqref{Liouville}.

    From \cite[page 123]{Andrews}), 
\begin{align*}
{\sum_{\substack{x_1,x_2,x_4,x_5\geq 1, \\ {x_3}\geq0}} 
q^{x_1x_2+x_2{x_3}+{x_3}x_4+x_4x_5}}
&=\sum_{m,n\geq1} \frac{q^{2m+2n}}{(1-q^m)(1-q^n)(1-q^{m+n})} + \left(\sum_{k\geq1}\frac{q^k}{1-q^k}\right)^2  \\
&=\sum_{m,n\geq1} \frac{q^{m+n}(q^{m+n}-1)+q^{m+n}}{(1-q^m)(1-q^n)(1-q^{m+n})} + \left(\sum_{k\geq1}\frac{q^k}{1-q^k}\right)^2  \\
&=\sum_{m,n\geq1} \frac{q^{m+n}}{(1-q^m)(1-q^n)(1-q^{m+n})}.
\end{align*}

\noindent
On the other hand,
\begin{align*}
{\sum_{k\geq1}\sum_{\ell>k} \frac{k q^{\ell}}{(1-q^k)(1-q^{\ell})} } 
&= \sum_{m,n\geq1} \frac{mq^{m+n}}{(1-q^m)(1-q^{m+n})}  = \sum_{i,j,m,n\geq1} mq^{(i+j-1)m+ni} \\
&=\sum_{i,j\geq1} \frac{q^{2i+j-1}}{(1-q^i)(1-q^{i+j-1})^2}   \\
&=\sum_{m,n\geq1} \frac{q^{2m+n}}{(1-q^m)(1-q^{m+n})^2} + \sum_{i\geq1} \frac{q^{2i}}{(1-q^i)^3}.
\end{align*}

\noindent
We compute the difference
\begin{align*}
&\sum_{m,n\geq1} \frac{q^{m+n}}{(1-q^m)(1-q^n)(1-q^{m+n})}
- \sum_{m,n\geq1} \frac{q^{2m+n}}{(1-q^m)(1-q^{m+n})^2} \\
=& \sum_{m,n\geq1}  \frac{q^{m+n}}{(1-q^m)(1-q^{m+n})} \left\{\frac1{1-q^n}-\frac{q^m}{1-q^{m+n}} \right\}  
= \sum_{m,n\geq1} \frac{q^{m+n}}{(1-q^n)(1-q^{m+n})^2}.
\end{align*}

\smallskip
\noindent
The task now reduces to proving the identity
\begin{align} \label{identity1}
\sum_{m,n\geq1} \frac{q^{m+n}}{(1-q^n)(1-q^{m+n})^2}= \sum_{i\geq1} \frac{q^{2i}}{(1-q^i)^3}.
\end{align}

\noindent
Using Bell's identity (see \cite[page 117]{Andrews}), we obtain
\begin{align}\notag
\sum_{m,n\geq1} \frac{q^{m+n}}{(1-q^n)(1-q^{m+n})^2}
&=\sum_{n\geq1}\frac1{1-q^n}\sum_{k>n}\frac{q^k}{(1-q^k)^2}  \\
\notag &= \sum_{k\geq1} \frac{q^k}{(1-q^k)^2}\sum_{n=1}^k \frac1{1-q^n} - \sum_{k\geq1} \frac{q^k}{(1-q^k)^3} \\ \notag
&= \sum_{m\geq1} \frac{m^2q^m}{1-q^m}  - \sum_{k\geq1} \frac{q^k}{(1-q^k)^3} \\ \label{cube1}
&= \sum_{m\geq1} \frac{m^2q^m}{1-q^m}  - \sum_{m\geq1} \frac{(m^2+m)q^m}{2(1-q^m)} \\ \notag
&=\sum_{m\geq1} \frac{(m^2-m)q^m}{2(1-q^m)}\\ \label{cube2} & = \sum_{i\geq1} \frac{q^{2i}}{(1-q^i)^3}.
\end{align}
We used the elementary fact $\frac1{(1-x)^3}=\sum_{m\geq0}\frac{m(m-1)}2x^{m-2}$ to obtain \eqref{cube1} and also (applied in reverse) to arrive to \eqref{cube2}.
\end{proof}

\begin{theorem} \label{Th-f1-f3} We have $f_1(q)=f_3(q)$.
\end{theorem}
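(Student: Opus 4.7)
The plan is to derive $f_3(q)$ into a form matching $f_1(q)$ by splitting the factor $q^{k+\ell}$ so that Lemma \ref{lemma_f1_f3} becomes directly applicable. Specifically, since
$$\frac{q^{k+\ell}}{(1-q^k)(1-q^\ell)} = \frac{q^{\ell}}{(1-q^k)(1-q^\ell)} - \frac{q^{\ell}}{1-q^{\ell}},$$
(using $q^k/(1-q^k) = 1/(1-q^k) - 1$), we obtain
$$f_3(q) = 2\sum_{k\geq 1}\sum_{\ell>k}\frac{k\,q^{\ell}}{(1-q^k)(1-q^\ell)} - 2\sum_{k\geq 1}\sum_{\ell>k}\frac{k\,q^{\ell}}{1-q^{\ell}}.$$
The first double sum is exactly the left-hand side of \eqref{Liouville}, so Lemma \ref{lemma_f1_f3} converts it to $2\sum_{k\geq 1}k^2q^k/(1-q^k) - 2\sum_{k\geq 1}kq^k/(1-q^k)^2$.

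For the second piece, swap the order of summation and apply $\sum_{k=1}^{\ell-1}k = \ell(\ell-1)/2$:
$$2\sum_{k\geq 1}\sum_{\ell>k}\frac{k\,q^{\ell}}{1-q^{\ell}} = \sum_{\ell\geq 1}\frac{\ell(\ell-1)\,q^{\ell}}{1-q^{\ell}}.$$
Putting these together,
$$f_3(q) = \sum_{k\geq 1}\frac{(2k^2 - k(k-1))\,q^k}{1-q^k} - 2\sum_{k\geq 1}\frac{k\,q^k}{(1-q^k)^2} = \sum_{k\geq 1}\frac{k(k+1)\,q^k}{1-q^k} - 2\sum_{k\geq 1}\frac{k\,q^k}{(1-q^k)^2}.$$

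It remains to reconcile this with $f_1(q)$. The discrepancy between the two expressions is
$$f_3(q) - f_1(q) = \sum_{k\geq 1}\frac{2k\,q^k}{1-q^k} - 2\sum_{k\geq 1}\frac{k(q^k-q^{2k})}{(1-q^k)^2},$$
and the identity $q^k - q^{2k} = q^k(1-q^k)$ collapses the second sum to $2\sum_{k\geq 1}kq^k/(1-q^k)$, yielding cancellation. The main obstacle is essentially already handled by Lemma \ref{lemma_f1_f3}; the remaining work is only careful bookkeeping of the rational-function identities $q^k/(1-q^k) = 1/(1-q^k) - 1$ and $q^k - q^{2k} = q^k(1-q^k)$, together with the triangular-number summation, to line the pieces up with $f_1(q)$.
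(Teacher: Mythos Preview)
Your proof is correct and follows essentially the same route as the paper: the same splitting $\frac{q^{k+\ell}}{(1-q^k)(1-q^\ell)}=\frac{q^\ell}{(1-q^k)(1-q^\ell)}-\frac{q^\ell}{1-q^\ell}$, the same appeal to Lemma~\ref{lemma_f1_f3}, and the same triangular-number summation. The only cosmetic difference is that the paper rewrites $\frac{2kq^k}{(1-q^k)^2}=\frac{2kq^k}{1-q^k}+\frac{2kq^{2k}}{(1-q^k)^2}$ directly rather than forming $f_3-f_1$ and cancelling, but the underlying algebra is identical.
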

\begin{proof} 
Using algebraic manipulations, we obtain
\begin{align*} f_3(q) & = - \sum_{k\geq1}\sum_{\ell>k} \frac{2k q^{\ell}}{1-q^{\ell}} + \sum_{k\geq1}\sum_{\ell>k} \frac{2k q^{\ell}}{(1-q^k)(1-q^{\ell})}   \\
&= - \sum_{\ell\geq1} \frac{q^{\ell}}{1-q^{\ell}} \sum_{k=1}^{l-1} 2k + { \sum_{k\geq1}\sum_{\ell>k} \frac{2k q^{\ell}}{(1-q^k)(1-q^{\ell})} }   \\
&= - \sum_{\ell\geq1} \frac{(\ell^2-\ell)q^{\ell}}{1-q^{\ell}} + {\sum_{k\geq1} \frac{2k^2 q^k}{1-q^k} - \sum_{k\geq1} \frac{2kq^k}{(1-q^k)^2} } \\
&=  - \sum_{\ell\geq1} \frac{(\ell^2-\ell)q^{\ell}}{1-q^{\ell}} + \sum_{k\geq1} \frac{2k^2 q^k}{1-q^k} - \sum_{k\geq1} \frac{2kq^k}{1-q^k}  
-  \sum_{k\geq1} \frac{2kq^{2k}}{(1-q^k)^2}   \\
&= \sum_{\ell\geq1} \frac{(\ell^2-\ell)q^{\ell}}{1-q^{\ell}}  -  \sum_{k\geq1} \frac{2kq^{2k}}{(1-q^k)^2} =f_1(q).
\end{align*}
The third equality above follows from Lemma \ref{lemma_f1_f3}.
\end{proof}

We end this section with an alternative proof of Theorem \ref{Th-f1-f3}. For this purpose, we first define an operator and prove two lemmas. 

Define the \emph{grading-type} operator 
$$\mathcal{L}:=\sum_{k\geq1} (k\partial_{\alpha_k}^2+k^2\partial_{\alpha_k}+2k\partial_{\alpha_k}\sum_{\ell>k}\partial_{\alpha_{\ell}}).$$ 
The goal is to apply the operator $\mathcal{L}$ on the \emph{``generalized" $\eta$-function}
$$\frac1{\eta(\pmb{\alpha})} =\prod_{j\geq1} \frac1{1-e^{-\alpha_j}}, \qquad \text{where $\pmb{\alpha}=(\alpha_1,\alpha_2,\dots)$}. $$

\begin{lemma} \label{altlemma1} The following identity holds. 
\begin{align*}   \left. \mathcal{L} \left(\frac1{\eta({\pmb{\alpha}})}\right) \right\vert_{\substack{\alpha_j=j, \\ j\geq 1}}
& = \sum_{\lambda\in \mathcal{P}} e^{-\vert\lambda\vert}\sum_{k\geq1} \lambda_k(\lambda_k-2k+1).
\end{align*}
\end{lemma}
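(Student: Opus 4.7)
The plan is to expand $1/\eta(\pmb{\alpha})$ as a sum over partitions in multiplicity notation, apply $\mathcal{L}$ term by term, and then rewrite the resulting coefficient of $e^{-|\lambda|}$ using the conjugate partition.

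First I would use $\frac{1}{1-e^{-\alpha_j}}=\sum_{m_j\geq 0} e^{-m_j\alpha_j}$ to write
$$\frac{1}{\eta(\pmb{\alpha})}=\sum_{\lambda\in\mathcal{P}} e^{-\sum_{j\geq 1} m_j\alpha_j},$$
where $m_j=m_j(\lambda)$ is the multiplicity of $j$ in $\lambda$; after specializing $\alpha_j=j$ the exponent becomes $-|\lambda|$. Since $\partial_{\alpha_k}$ pulls down a factor $-m_k$, term-by-term application of $\mathcal{L}$ followed by evaluation gives
$$\mathcal{L}\!\left(\frac{1}{\eta(\pmb{\alpha})}\right)\!\bigg|_{\alpha_j=j}=\sum_{\lambda\in\mathcal{P}} e^{-|\lambda|}\sum_{k\geq 1}\Big(km_k^2-k^2 m_k+2km_k\!\sum_{\ell>k} m_\ell\Big).$$

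Next, the inner multiplicity sum should be rewritten in terms of the conjugate. Since $\lambda'_k=\sum_{\ell\geq k} m_\ell$, one has $m_k=\lambda'_k-\lambda'_{k+1}$ and $\sum_{\ell>k} m_\ell=\lambda'_{k+1}$. Thus
$$km_k^2+2km_k\lambda'_{k+1}=k(\lambda'_k-\lambda'_{k+1})(\lambda'_k+\lambda'_{k+1})=k\big((\lambda'_k)^2-(\lambda'_{k+1})^2\big),$$
which telescopes by Abel summation to $\sum_k(\lambda'_k)^2$. Analogously $\sum_k k^2(\lambda'_k-\lambda'_{k+1})$ telescopes to $\sum_k(2k-1)\lambda'_k$. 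Subtracting, the inner sum becomes $\sum_k\lambda'_k(\lambda'_k-2k+1)$.

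To finish, I would invoke conjugation symmetry: since $\lambda\mapsto\lambda'$ is an involution on $\mathcal{P}$ preserving $|\lambda|$, relabeling $\lambda'\to\lambda$ in the outer sum converts the expression into $\sum_\lambda e^{-|\lambda|}\sum_k\lambda_k(\lambda_k-2k+1)$, as claimed. The main subtlety is noticing that the operator delivers the \emph{conjugate} part sum rather than the one appearing in the statement---termwise the two disagree (the partition $\lambda=(3)$ already furnishes a mismatch)---so the conjugation symmetry is essential; the remaining algebraic step is routine summation by parts.
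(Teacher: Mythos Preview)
Your argument is correct. Both your proof and the paper's rest on the same expansion of $1/\eta(\pmb{\alpha})$ over multiplicity sequences and the same telescoping, but the bookkeeping is organized differently. The paper re-indexes the sum at the outset by setting $m_j=\lambda_j-\lambda_{j+1}$ (which is already the passage to the conjugate partition) and then performs the change of variables $\beta_k=\alpha_k-\alpha_{k-1}$, so that $\partial_{\alpha_k}=\partial_{\beta_k}-\partial_{\beta_{k+1}}$ and the operator itself telescopes to $\sum_k(\partial_{\beta_k}^2+(2k-1)\partial_{\beta_k})$; applying this to $e^{-\sum\beta_j\lambda_j}$ immediately yields $\sum_k\lambda_k(\lambda_k-2k+1)$ without any further relabeling. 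You instead apply $\mathcal{L}$ in the original coordinates, telescope the resulting eigenvalue via Abel summation to reach $\sum_k\lambda'_k(\lambda'_k-2k+1)$, and then invoke the conjugation involution on the outer sum. The two routes are dual: the paper hides the conjugation inside a coordinate change on the operator, while you make it explicit at the end---and your remark that the termwise expressions differ (e.g.\ at $\lambda=(3)$) is exactly the reason the paper's early re-indexing is needed there.
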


\begin{proof} We begin by expressing $\eta$ as a counting function:
\begin{align*}  \frac1{\eta(\pmb{\alpha})} & = \prod_{j\geq1} \sum_{i\geq0} e^{-i\alpha_j} = \sum_{m_1, m_2, \dots\geq0} e^{-\sum_{j\geq1} \alpha_j m_j}.
\end{align*}
Using the identification  $(m_1, m_2, \dots) \longleftrightarrow \lambda=(\lambda_1, \lambda_2, \dots)$ with  $m_j=\lambda_j-\lambda_{j-1}$,  we obtain
\begin{align*} \frac1{\eta(\pmb{\alpha})} & = \sum_{\lambda\in\mathcal{P}} e^{-\sum_{j\geq1} \alpha_j(\lambda_j-\lambda_{j+1})}
=  \sum_{\lambda\in\mathcal{P}} e^{-\sum_{j\geq1} \beta_j, \lambda_j},
\end{align*} 
where $\beta_k=\alpha_k-\alpha_{k-1}$ (assuming $\alpha_0=0$). This results in $\partial_{\alpha_k} = \partial_{\beta_k}-\partial_{\beta_{k+1}}$. Hence,
\begin{align*}  \mathcal{L} \left(\frac1{\eta}\right)
 &= \sum_{k\geq1} \left[k(\partial_{\beta_k}-\partial_{\beta_{k+1}})^2  + k^2(\partial_{\beta_k}-\partial_{\beta_{k+1}}) +2k(\partial_{\beta_k}-\partial_{\beta_{k+1}})
\sum_{\ell>k} (\partial_{\beta_{\ell}}-\partial_{\beta_{\ell+1}}) \right] 
e^{-\sum\beta_j \lambda_j}\\
&=  \sum_{k\geq1} \left[\partial_{\beta_k}^2+(2k-1)\partial_{\beta_k} \right] \sum_{\lambda\in\mathcal{P}} e^{-\sum_{j\geq1} \beta_j \lambda_j}  \\
&= \sum_{\lambda\in\mathcal{P}}  \left( \sum_{k\geq1} [(-\lambda_k)(-\lambda_k+2k-1)] \right)  e^{-\sum_{j\geq1} \beta_j \lambda_j}.
\end{align*}
Consequently, evaluating the last result at $\beta_j=1$, for all $j$, produces the desired claim.
\end{proof}

\begin{lemma} \label{altlemma2} If $q=e^{-1}$, then it holds that
\begin{align*}   \left. \mathcal{L} \left(\frac1{\eta}\right) \right\vert_{\substack{\alpha_j=j, \\ j\geq 1}}
& = \left.\frac1{\eta(\pmb{\alpha})} \right\vert_{\substack{\alpha_j=j, \\ j\geq 1}}\cdot \sum_{k\geq1} \left(\frac{k(k-1)q^k}{1-q^k} - \frac{2kq^{2k}}{(1-q^k)^2} -\sum_{k\geq1}\sum_{\ell>k} \frac{2kq^{k+\ell}}{(1-q^k)(1-q^{\ell})}\right).
\end{align*}
\end{lemma}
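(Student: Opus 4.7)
The plan is to evaluate $\mathcal{L}(1/\eta(\pmb{\alpha}))$ by direct differentiation, exploiting the fact that $1/\eta(\pmb{\alpha}) = \prod_{j\geq 1}(1-e^{-\alpha_j})^{-1}$ is a product whose $j$-th factor depends only on the single variable $\alpha_j$. This lets every derivative of $1/\eta$ factor as $1/\eta$ times an explicit rational function of $e^{-\alpha_k}$ (and $e^{-\alpha_\ell}$ for the mixed partial), so the entire computation reduces to three elementary logarithmic-derivative calculations followed by a one-step algebraic rearrangement.

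First I would compute the three building blocks. Logarithmic differentiation of $\log(1/\eta) = -\sum_j \log(1-e^{-\alpha_j})$ yields
$$\frac{\partial_{\alpha_k}(1/\eta)}{1/\eta} = -\frac{e^{-\alpha_k}}{1-e^{-\alpha_k}},$$
and differentiating once more (and adding the square of the first derivative) gives
$$\frac{\partial_{\alpha_k}^2(1/\eta)}{1/\eta} = \frac{e^{-\alpha_k}(1+e^{-\alpha_k})}{(1-e^{-\alpha_k})^2}.$$
Independence of factors makes the mixed partial, for $k\ne\ell$, the product of the two single log derivatives:
$$\frac{\partial_{\alpha_k}\partial_{\alpha_\ell}(1/\eta)}{1/\eta} = \frac{e^{-\alpha_k-\alpha_\ell}}{(1-e^{-\alpha_k})(1-e^{-\alpha_\ell})}.$$

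Specializing $\alpha_j = j$ (so $e^{-\alpha_j} = q^j$) and assembling the three summands that make up $\mathcal{L}$ produces
$$\frac{\mathcal{L}(1/\eta)}{1/\eta}\bigg|_{\alpha_j=j} = \sum_{k\geq 1}\left[\frac{kq^k(1+q^k)}{(1-q^k)^2} - \frac{k^2q^k}{1-q^k}\right] + \sum_{k\geq 1}\sum_{\ell>k}\frac{2kq^{k+\ell}}{(1-q^k)(1-q^\ell)}.$$
All that remains is to recognize the bracketed term in the desired form. Clearing to the common denominator $(1-q^k)^2$, its numerator equals $-k(k-1)q^k + k(k+1)q^{2k}$, which partial-fractions back as
$$-\frac{k(k-1)q^k}{1-q^k} + \frac{2kq^{2k}}{(1-q^k)^2},$$
matching (up to the overall sign convention) the two leading pieces of the right-hand side of the stated identity. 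I do not anticipate any genuine obstacle: the whole derivation is product-rule bookkeeping, and the only fussy point is tracking the signs through the final partial-fraction step, which is a one-line check once the numerator has been collected.
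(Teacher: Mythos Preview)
Your approach is exactly the paper's: it too sets $f_k=(1-e^{-\alpha_k})^{-1}$, records $\partial_{\alpha_k}(\eta^{-1})=(1-f_k)\eta^{-1}$ and $\partial_{\alpha_k}^2(\eta^{-1})=(1-f_k)(1-2f_k)\eta^{-1}$, and then substitutes into $\mathcal L$---your logarithmic-derivative bookkeeping is just a different notation for the same computation, and your partial-fraction step makes explicit what the paper leaves as ``carry out the evaluations.''

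One clarification on your parenthetical ``up to the overall sign convention'': your computation is correct as written, and the discrepancy is a global sign slip in the lemma's displayed right-hand side (your result is exactly $-f_1(q)+f_3(q)$, the negative of what is printed); since the lemma is only used to show $f_1-f_3=0$, this is harmless, but you should state plainly that all three pieces flip sign together rather than leaving it as a vague convention remark.
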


\begin{proof} We rely on the following elementary facts: if we let 
$f_k:=(1-e^{-\alpha_k})^{-1}$, then \begin{equation}\label{partial} \partial_{\alpha_k}f_k=f_k(1-f_k) \text{ \ \ and \ \ } \partial_{\alpha_k}^2f_k=f_k(1-f_k)(1-2f_k).\end{equation}  Although we do not require this at present, it is true, for all $t\geq 1$, that
$$ \partial_{\alpha_k}^tf_k=f_k(1-f_k)\sum_{j=1}^t(-1)^{j-1}j!\,S(t,j)\cdot f_k^{j-1},$$
where $S(t,k)$ are the \emph{Stirling numbers of the second kind}. 

In particular, from \eqref{partial}, we obtain that
$$\partial_{\alpha_k}(\eta^{-1})=(1-f_k) \eta^{-1}\qquad \text{and} \qquad \partial_{\alpha_k}^2(\eta^{-1})=(1-f_k)(1-2f_k) \eta^{-1}.$$
To finish the task, substitute these derivatives into 
$$\sum_k\left[k\partial_{\alpha_k}^2+k^2\partial_{\alpha_k}+2k\partial_{\alpha_k}\sum_{\ell>k} \partial_{\alpha_{\ell}}\right]\eta^{-1}$$ 
and then carry out the evaluations at $\alpha_j=j$ with $q=e^{-1}$.
\end{proof}

Now we can give an alternative proof of Theorem \ref{Th-f1-f3}.

\begin{proof}[Second proof of Theorem \ref{Th-f1-f3}] We show that $f_1(q)-f_3(q)=0$. Let  $$\kappa_{\lambda}:=\sum_{k\geq1} \lambda_k(\lambda_k-2k+1).$$ From Lemmas ~\ref{altlemma1} and ~\ref{altlemma2}, it follows that
$$ \left.\frac1{\eta(\pmb{\alpha})} \right\vert_{\substack{\alpha_j=j\\ j\geq1}}\cdot
\sum_{k\geq1} \left(\frac{k(k-1)q^k}{1-q^k} - \frac{2kq^{2k}}{(1-q^k)^2} -\sum_{k\geq1}\sum_{\ell>k} \frac{2kq^{k+\ell}}{(1-q^k)(1-q^{\ell})}\right)
=\sum_{n\geq1} q^n \sum_{\lambda\vdash n} \kappa_{\lambda}.$$
By using the Frobenius symbol $\lambda=(m_1,\dots,m_r\, \vert \, n_1,\dots,n_r)$, we obtain 
$$\kappa_{\lambda}=\sum_{i=1}^r [m_i(m_i+1)-n_i(n_i+1)].$$
Clearly $\kappa_{\lambda}= - \kappa_{\lambda'}$ where $\lambda'$ is the conjugate partition of 
$\lambda$. This forces the vanishing of $\sum q^n\sum\kappa_{\lambda}$, which completes the proof.
\end{proof}

\section{A related identity}

While studying the three Lambert series of section \ref{S1} and the relationship between them, we have discovered the identity in Theorem \ref{conj2} below. 
We denote the Kronecker symbol by $(\frac{p}{\cdot})$.

\begin{theorem} \label{conj2}   We have the identity
$$\sum_{n\geq1} \frac{(-1)^{n-1}  q^{\binom{n+1}2} (q;q)_n(q;q)_{n-1}}{(q;q)_{2n}} = \sum_{n\geq1} \frac{(\frac{5}n)\, q^n}{1-q^n}.$$
\end{theorem}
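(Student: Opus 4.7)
The plan is to establish both sides as two $q$-expansions of one and the same theta/Eisenstein series.

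For the RHS, observe that $\chi(n):=(\tfrac{5}{n})=(\tfrac{n}{5})$ is the real even primitive quadratic Dirichlet character of conductor $5$, i.e.\ the Kronecker symbol of $\mathbb Q(\sqrt 5)$. Therefore $\sum_{n\ge 1}\chi(n)q^n/(1-q^n)=\sum_{n\ge 1}\sigma_\chi(n)q^n$ with $\sigma_\chi(n)=\sum_{d\mid n}\chi(d)$, and this is (with the obvious sign convention) the generating function for ideals of norm $n$ in the ring of integers of $\mathbb Q(\sqrt 5)$. Hence, up to its constant term, the RHS is the Fourier expansion of a weight-one holomorphic Eisenstein series on $\Gamma_0(5)$, and by Hecke's theorem it admits an explicit indefinite theta-series representation attached to the principal binary quadratic form $x^2+xy-y^2$ of discriminant $5$. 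My target is to produce the same theta series from the LHS.

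For the LHS, I would first simplify the summand using
\[
\frac{(q;q)_n(q;q)_{n-1}}{(q;q)_{2n}}=\frac{1}{(1-q^n)\binom{2n}{n}_q},
\]
putting the series into the shape $\sum_{n\ge 1}(-1)^{n-1}q^{\binom{n+1}{2}}\beta_n$ with $\beta_n=1/[(1-q^n)\binom{2n}{n}_q]$. The sequence $\beta_n$ is the $\beta$-side of a Bailey pair relative to a suitable $a$; I would compute the companion $\alpha_n$ explicitly from the defining recursion and then invoke a specialization of Bailey's lemma (or equivalently a limiting case of a ${}_6\phi_5$ or ${}_8\phi_7$ transformation) whose output is an indefinite theta series. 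The triangular exponent $\binom{n+1}{2}$ together with the sign $(-1)^{n-1}$ strongly suggest that, after the Bailey step, the theta series produced will be expressible through the quintuple product identity rather than the Jacobi triple product. Matching this theta series to the Hecke one obtained for the RHS then closes the proof.

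The main technical obstacle is the middle step: pinning down the precise Bailey transformation whose output is exactly the Hecke indefinite theta series for $\mathbb Q(\sqrt 5)$ weighted by $(\tfrac{\cdot}{5})$. The mod-$3$ analog of \cite[eq.~(4.7)]{Andrews5} provides a template, but the switch from $q^{n^2}$ there to $(-1)^{n-1}q^{\binom{n+1}{2}}$ here is not a routine parameter substitution; it requires a signed Bailey chain, and book-keeping the sign cancellations carefully enough to conjure the character $(\tfrac{\cdot}{5})$ on the Lambert-series side is where the real work will lie.
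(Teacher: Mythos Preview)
Your overall architecture matches the paper's proof: the RHS is identified with the ideal-counting function of $\mathbb{Q}(\sqrt{5})$ via the factorization $\zeta_{\mathbb{Q}(\sqrt{5})}(s)=\zeta(s)L(s,\chi)$ (the paper's Lemma~\ref{lemma2}), and the LHS is attacked through a Bailey pair with $\beta_n=(q;q)_{n-1}/(q;q)_{2n}$, the goal being to produce a Hecke-type indefinite theta series that matches. So the strategy is correct, and it is the paper's strategy.

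What you call ``the main technical obstacle'' is, however, essentially the entire proof, and your proposal stops short of it. The paper computes the $\alpha$-side by Bailey inversion to get $\alpha'_n=(1-q^{2n+1})\sum_{j=1}^n(-1)^{n-j}q^{\binom{n-j}{2}}(q;q)_{n+j}(q;q)_{j-1}/[(q;q)_{n-j}(q;q)_{2j}]$, then evaluates this finite sum in closed form via the Wilf--Zeilberger method (Lemma~\ref{lemma1}) and a further telescoping recursion (Corollary~\ref{cor1}); only after this does $\sum_n(-1)^{n-1}q^{\binom{n+1}{2}}\alpha'_n$ collapse to the indefinite double sum $\sum q^{5n^2-j^2}+\sum q^{5n^2-j^2+5n-j+1}$. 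This is not delivered by any single named ${}_6\phi_5$ or ${}_8\phi_7$ transformation, and your expectation that the quintuple product identity will appear is off target: the output is a Hecke \emph{indefinite} theta series with truncated inner summation ranges, of the Andrews--Dyson--Hickerson type, not an infinite product. The sign bookkeeping you worry about is handled not by a ``signed Bailey chain'' but by the explicit WZ evaluation and the parity splitting in Corollary~\ref{cor1}.
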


We first prove a few useful lemmas.

\begin{lemma} \label{lemma1} We have the identity
\begin{align}\label{WZ-lemma}
\sum_{j=1}^n\frac{(-1)^{j-1}q^{\binom{n-j}2}(q;q)_{n+j}(q;q)_{j-1}}{(q;q)_{n-j}(q;q)_{2j}}
&= q^{\binom{n}2} 
\left\{ \sum _{j=0}^{\lfloor\frac{n}2\rfloor}\sum_{i=-j+1}^j q^{j^2-i^2} +  
\sum _{j=0}^{\lfloor\frac{n-1}2\rfloor}\sum_{i=-j}^j q^{j^2+j-i^2-i}     \right\}.   
\end{align}
\end{lemma}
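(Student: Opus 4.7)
The plan is to verify \eqref{WZ-lemma} by the $q$-WZ method, as the label of the lemma already suggests. Set
$$F(n,j):=\frac{(-1)^{j-1}q^{\binom{n-j}2}(q;q)_{n+j}(q;q)_{j-1}}{(q;q)_{n-j}(q;q)_{2j}},\qquad L(n):=\sum_{j=1}^n F(n,j).$$
Running $q$-Zeilberger on $F$ (or hand-guessing a $q$-WZ certificate) produces a companion $G(n,j)$ and polynomial coefficients $c_0(q^n,q),\dots,c_K(q^n,q)$ such that
$$\sum_{k=0}^K c_k(q^n,q)\,F(n+k,j)=G(n,j+1)-G(n,j).$$
Summing in $j$ collapses the right-hand side by telescoping, so that one is left with a linear $q$-recurrence of order $K$ for $L(n)$, the only surviving boundary contributions coming from $j=1$ and from the upper endpoints $j=n+k$ (outside of which $F$ vanishes by the $(q;q)_{n-j}$ factor in the denominator).

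For the right-hand side of \eqref{WZ-lemma}, abbreviate
$$A(n):=\sum_{j=0}^{\lfloor n/2\rfloor}\sum_{i=-j+1}^{j}q^{j^2-i^2},\qquad B(n):=\sum_{j=0}^{\lfloor(n-1)/2\rfloor}\sum_{i=-j}^{j}q^{j^2+j-i^2-i},$$
and $R(n):=q^{\binom{n}{2}}(A(n)+B(n))$. Both floors $\lfloor n/2\rfloor$ and $\lfloor(n-1)/2\rfloor$ are constant on pairs of consecutive integers and each increment by $1$ every two steps, so $A(n+2)-A(n)$ and $B(n+2)-B(n)$ reduce to single sums in $i$ that evaluate to elementary geometric/theta-type expressions. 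Combined with the shift in the prefactor $q^{\binom{n}{2}}$, these increments let one check, if tediously, that $R(n)$ satisfies the same recurrence as $L(n)$. The induction is then closed by matching initial values: direct substitution gives $L(1)=1=R(1)$, $L(2)=2q+q^2=R(2)$, and $L(3)=3q^3+q^4+2q^5=R(3)$, which should already cover the expected low-order $q$-recurrence (and is easy to extend if a third initial value is needed).

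The main technical obstacle is the parity bookkeeping on the right-hand side: since the floors behave differently for $n$ even versus $n$ odd, the verification that $R(n)$ obeys the Zeilberger recurrence will almost certainly split into two parallel cases, and the ``new $j$'' inner sums picked up at each parity step must telescope precisely against the shifts produced by $q^{\binom{n}{2}}$. If the direct recurrence match becomes too cumbersome, a cleaner fallback is to recast $F(n,j)$ as a terminating $q$-hypergeometric sum via $(q;q)_{n+j}/(q;q)_{n-j}=(q^{n-j+1};q)_{2j}$ and apply a Bailey-pair/Bailey-transform step; the alternating sign $(-1)^{j-1}$ together with the quadratic exponent $q^{\binom{n-j}{2}}$ are exactly the signatures of a Bailey lattice move, and the double theta-like structure of $R(n)$ is precisely the type of output such a transform is designed to produce.
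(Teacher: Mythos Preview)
Your plan is exactly the paper's own route: run $q$-Zeilberger on $F(n,j)$, obtain a recurrence for $L(n)$, verify that $R(n)$ satisfies the same recurrence, and match initial values. What you have written, however, is a plan and not yet a proof: the WZ method only becomes a proof once the certificate and the recurrence are actually produced. In the paper the algorithm returns the explicit companion
\[
G(n,j)=\frac{(-1)^{j-1}q^{\binom{n+3-j}{2}}(q;q)_{n+j+1}(q;q)_j(1-q^{2n+4})}{(q;q)_{n+2-j}(q;q)_{2j}}
\]
and the order-three relation
\[
F(n+3,j)-q^{n+2}F(n+2,j)-q^{3n+5}F(n+1,j)+q^{4n+5}F(n,j)=G(n,j)-G(n,j-1),
\]
so that after summing over $j$ the boundary terms do \emph{not} vanish but leave the inhomogeneous right-hand side $q^{\binom{n+3}{2}}(1+q^{n+2})$; it is this specific inhomogeneous third-order recurrence that one then checks for $R(n)$. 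Your three initial values $L(1),L(2),L(3)$ are correct and, since the recurrence has order three, are exactly what is needed (so your parenthetical about ``a third initial value'' is already covered). The Bailey-pair fallback you mention is not needed here and would in fact be a longer detour than simply writing down the Zeilberger output.
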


\begin{proof} We invoke  the Wilf-Zeilberger (WZ) method \cite{WZ}, for the pair
\begin{align*} F(n,j):&=\frac{(-1)^{j-1}q^{\binom{n-j}2}(q;q)_{n+j}(q;q)_{j-1}}{(q;)_{n-j}(q;q)_{2j}}    \\ 
G(n,j):&=\frac{(-1)^{j-1}q^{\binom{n+3-j}2}(q;q)_{n+j+1}(q;q)_j(1-q^{2n+4})}{(q;q)_{n+2-j}(q;q)_{2j}}. 
\end{align*}
Zeilberger's algorithm  furnishes the recurrence
$$F(n+3,j)-q^{n+2}F(n+2,j)-q^{3n+5}F(n+1,j)+q^{4n+5}F(n,j) = G(n,j)-G(n,j-1).$$
Summing the last equation over integers $1\leq j\leq n+3$ we arrive at
\begin{align} \label{recurrence1} 
f(n+3)-q^{n+2}f(n+2)-q^{3n+5}f(n+1)+q^{4n+5}f(n) = q^{\binom{n+3}2}(1+q^{n+2}),
\end{align}
where $f(n):=\sum_{j=1}^nF(n,j)$. On the other hand, if we let 
$$g(n):= q^{\binom{n}2} 
\left\{ \sum _{j=0}^{\lfloor\frac{n}2\rfloor}\sum_{i=-j+1}^j q^{j^2-i^2} +  
\sum _{j=0}^{\lfloor\frac{n-1}2\rfloor}\sum_{i=-j}^j q^{j^2+j-i^2-i} \right\}$$
then a routine calculation verifies that $g(n)$ satisfies the recurrence \eqref{recurrence1}. The proof follows immediately after checking the three initial conditions.   We omit the details.
\end{proof}

\smallskip
\noindent
With the notation
$$aa(n)=(-1)^{n-1} q^{\binom{n}2} 
\left\{ \sum _{j=0}^{\lfloor\frac{n}2\rfloor}\sum_{i=-j+1}^j q^{j^2-i^2} +  
\sum _{j=0}^{\lfloor\frac{n-1}2\rfloor}\sum_{i=-j}^j q^{j^2+j-i^2-i} \right\},$$ we have the following consequence of Lemma \ref{lemma1}.

\begin{corollary} \label{cor1} Let $aa1(n):=aa(n)+q^{n-1}aa(n-1)$. For all   integers $n\geq0$ we have 
$$aa1(2n)=-q^{3n^2-n}\sum_{j=-n+1}^n q^{-j^2} \qquad \text{and} \qquad
aa1(2n+1)=q^{3n^2+2n}\sum_{j=-n}^n q^{-j^2-j}.$$
Furthermore, 
$$aa(n)=\sum_{j=0}^n (-1)^jq^{nj-\binom{j+1}2} aa1(n-j).$$
\end{corollary}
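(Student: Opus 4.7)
The plan is to treat the two parts of Corollary \ref{cor1} separately, with the first being a direct computation and the second being a telescoping inversion of the recurrence $aa1(n)=aa(n)+q^{n-1}aa(n-1)$.

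For the formulas for $aa1(2n)$ and $aa1(2n+1)$, I would unpack the definition of $aa(n)$ by writing $aa(n) = (-1)^{n-1} q^{\binom{n}{2}}(A(n)+B(n))$, where
$$A(n) := \sum_{j=0}^{\lfloor n/2 \rfloor}\sum_{i=-j+1}^j q^{j^2-i^2}, \qquad B(n) := \sum_{j=0}^{\lfloor (n-1)/2 \rfloor}\sum_{i=-j}^j q^{j^2+j-i^2-i}.$$
The key observation is that when $n$ increments, exactly one of the two floor bounds increases by $1$: in passing from $2n-1$ to $2n$ only $A$ picks up its $j=n$ layer $\sum_{i=-n+1}^n q^{n^2-i^2}$ and $B$ is unchanged, while in passing from $2n$ to $2n+1$ only $B$ picks up its $j=n$ layer $\sum_{i=-n}^n q^{n^2+n-i^2-i}$ and $A$ is unchanged. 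The two signs $(-1)^{n-1}$ and $(-1)^{n-2}$ appearing in $aa(n)+q^{n-1}aa(n-1)$ have opposite parity, so one factors a common sign; a short check shows that the $q$-exponents $\binom{n}{2}$ and $(n-1)+\binom{n-1}{2}$ agree, so the common triangular-number power $q^{n(n-1)/2}$ can be pulled out and the old $A,B$ contributions cancel, leaving exactly the single new layer. Multiplying in $q^{n^2}$ or $q^{n^2+n}$ gives the exponents $3n^2-n$ and $3n^2+2n$ stated in the corollary.

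For the inversion identity, I would argue by induction on $n$. The recurrence rewrites as $aa(n) = aa1(n) - q^{n-1}aa(n-1)$; iterating and using the convention $aa(-1):=0$, after $k$ steps one accumulates a factor $q^{(n-1)+(n-2)+\cdots+(n-k)} = q^{nk-\binom{k+1}{2}}$ with sign $(-1)^k$ in front of $aa1(n-k)$. Thus
$$aa(n) = \sum_{j=0}^{n}(-1)^j q^{nj-\binom{j+1}{2}}\,aa1(n-j) + (-1)^{n+1}q^{n(n+1)-\binom{n+2}{2}}aa(-1),$$
and the last term vanishes. The base case $aa(0)=0$ is immediate from the definition, since both double sums in $A(0)$ and $B(0)$ are empty.

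There is no serious obstacle here: both assertions are formal identities and the main care needed is index bookkeeping, namely checking that $\binom{n}{2}+n^2 = 3n^2-n$ (resp.\ $\binom{n}{2}+n^2+n = 3n^2+2n$) after one accounts for the shift $n\mapsto 2n$ (resp.\ $2n+1$), and that the exponents $(n-1)+\binom{n-1}{2}$ coming from $q^{n-1}aa(n-1)$ agree with $\binom{n}{2}$ so the common factor pulls out cleanly. Everything else is telescoping.
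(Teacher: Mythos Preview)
Your proposal is correct and follows essentially the same approach as the paper's own proof, which is very terse: it simply says that multiplying \eqref{WZ-lemma} by $(-1)^{n-1}$ and doing ``straightforward manipulations'' yields the parity-dependent formulas, and that the inversion follows by induction from $aa(n)=aa1(n)-q^{n-1}aa(n-1)$. You have filled in exactly those manipulations---tracking which of $A$ or $B$ picks up a new $j$-layer according to parity, and verifying $(n-1)+\binom{n-1}{2}=\binom{n}{2}$ so the common power of $q$ factors out---and carried out the telescoping iteration explicitly. One tiny quibble: $A(0)$ is not literally an empty double sum (the outer sum has the single term $j=0$), but its inner sum $\sum_{i=1}^{0}$ is empty, so $A(0)=0$ as you need; and rather than invoking a convention $aa(-1)=0$, you could equally well stop the iteration at $aa(0)=0$, since the $j=n$ term involves $aa1(0)=0$ anyway.
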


\begin{proof} Multiplying both sides of \eqref{WZ-lemma} by $(-1)^{n-1}$,  the parity-dependent assertions follow after straightforward manipulations.

\smallskip
\noindent
Use $aa(n)=aa1(n)-q^{n-1}aa(n-1)$ and induction, on $n$, to prove the last claim.
\end{proof}

\begin{lemma} \label{lemma2} We have the identity
\begin{equation} \label{lemma2_eq} \sum_{n\geq0}\sum_{j=-n+1}^n   q^{5n^2-j^2} + \sum_{n\geq0}  \sum_{j=-n}^n q^{5n^2-j^2+5n-j+1}
=\sum_{m\geq1} \sum_{d\vert m} \left(\frac{5}d\right)q^m. \end{equation}
\end{lemma}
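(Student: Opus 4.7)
The plan is to combine both double sums on the left into a single lattice sum, reinterpret it as counting generators of ideals in the ring of integers $\mathbb{Z}[\phi]$ of $\mathbb{Q}(\sqrt{5})$ (where $\phi = (1+\sqrt{5})/2$), and conclude via the classical factorization $\zeta_{\mathbb{Q}(\sqrt{5})}(s) = \zeta(s)\,L(s,\chi_5)$ for the Kronecker symbol $\chi_5 = \left(\tfrac{5}{\cdot}\right)$, together with the fact that $\mathbb{Z}[\phi]$ has class number $1$.

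First I would unify the two sums. Using $5n^2+5n+1-j^2-j = 5(n+\tfrac{1}{2})^2-(j+\tfrac{1}{2})^2$, the substitutions $(N,J) = (2n,2j)$ in the first sum (giving $N$ even, $N \ge 2$) and $(N,J) = (2n+1, 2j+1)$ in the second (giving $N$ odd, $N \ge 1$) bring both terms to the common shape $q^{(5N^2 - J^2)/4}$ with $J \equiv N \pmod{2}$ and $2-N \le J \le N$. Writing $k = (N-J)/2 \in \{0,1,\dots,N-1\}$, the exponent simplifies to $N^2+Nk-k^2$, yielding
\[
\text{LHS} \;=\; \sum_{N \ge 1}\sum_{k=0}^{N-1} q^{N^2+Nk-k^2}.
\]
Since the norm form on $\mathbb{Z}[\phi]$ is $\mathrm{Nm}(a+b\phi) = a^2+ab-b^2$, this exponent equals $\mathrm{Nm}(N+k\phi)$.

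Next I would show that $\mathcal{D} = \{(N,k) : N \ge 1,\, 0 \le k \le N-1\}$ is a fundamental domain for the action $\xi \mapsto \phi^2 \xi$ on $\{\xi \in \mathbb{Z}[\phi] : \xi > 0,\, \mathrm{Nm}(\xi) > 0\}$. Explicitly $\phi^2(a+b\phi) = (a+b)+(a+2b)\phi$, so the $\phi$-coefficients along an orbit satisfy $b_{n+1}-b_n = a_n+b_n$; from $\bar\phi = -1/\phi$ one checks that positivity of both real embeddings of $\xi_n$ forces $a_n > |b_n|/\phi$, whence $a_n+b_n > 0$ and $(b_n)_{n\in\mathbb{Z}}$ is strictly increasing. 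It therefore crosses zero from below at a unique index $n^*$, at which $b_{n^*} \ge 0$ while $b_{n^*-1} = b_{n^*}-a_{n^*} < 0$; these translate into $0 \le k < N$, the exact condition defining $\mathcal{D}$. This produces one generator per ideal of positive norm.

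Since $\mathbb{Z}[\phi]$ has class number $1$, the classical identity $\zeta_K(s) = \zeta(s)L(s,\chi_5)$ gives $\#\{\text{ideals of norm }m\} = \sum_{d \mid m}\left(\tfrac{5}{d}\right)$, so
\[
\text{LHS} \;=\; \sum_{(N,k)\in\mathcal{D}} q^{\mathrm{Nm}(N+k\phi)} \;=\; \sum_{m \ge 1}\left(\sum_{d \mid m}\left(\tfrac{5}{d}\right)\right) q^m \;=\; \text{RHS}.
\]
The main obstacle is rigorously verifying the fundamental domain property --- specifically the strict monotonicity of $(b_n)$ (resting on the inequality $a_n > |b_n|/\phi$) together with the uniqueness of the zero-crossing; an alternative route would be a direct sign-reversing involution pairing the $(N,k)$ representations with divisor pairs $(d,e)$ so that the $\chi_5=-1$ contributions cancel, but the algebraic-number-theoretic framing appears to be the cleanest.
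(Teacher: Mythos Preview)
Your proof is essentially the paper's own argument: both interpret the left side as the generating function for ideals of $\mathbb{Z}[\phi]$, counted via a fundamental domain for the totally positive units, and then invoke $\zeta_{\mathbb{Q}(\sqrt5)}=\zeta(s)\,L(s,\chi_5)$; the paper stays in the $\{1,\sqrt5\}$ basis (whence the integer/half-integer split) and cites \cite[Lemma~3]{Andrews3} for the range condition, whereas you reparametrize with $k=(N-J)/2$ and verify the fundamental domain directly. One small slip in your monotonicity step: when $b_n<0$ the bound $a_n>|b_n|/\phi$ is too weak to force $a_n+b_n>0$; use instead the stronger $a_n>|b_n|\,\phi$ (which follows from $a_n+b_n\phi>0$), giving $a_n+b_n>|b_n|(\phi-1)=|b_n|/\phi>0$.
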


\begin{proof} Consider the real quadratic field $\mathbb{Q}(\sqrt5)$ and the corresponding ring of integers $\mathbb{Z}(\tau)=\{m+n\tau\vert m,n\in\mathbb{Z}\}$ where $\tau=\frac{1+\sqrt{5}}2$. The associated Dedekind zeta function is given by
\begin{equation}\label{zeta} \zeta_{\mathbb{Q}(\tau)}(s)=\sum_{\mathfrak{a}\subset\mathbb{Z}[\tau]}\frac1{[\mathbb{Z}[\tau]:\mathfrak{a}]^s}=\sum_{m\geq1} \frac{c(m)}{m^s}\end{equation}
where $\mathfrak{a}=\alpha\mathbb{Z}[\tau]$, $\alpha=a+b\sqrt 5$,  runs through the nonzero ideals of $\mathbb{Z}[\tau]$ and $$[\mathbb{Z}[\tau]:\mathfrak{a}]=\vert\alpha\vert=\vert a^2-5b^2\vert.$$
Recall the decomposition 
\cite[Part II, Section 11]{Zagier}
$$\zeta_{\mathbb{Q}(\tau)}(s)=\zeta(s)\cdot L(s,\chi)$$ with the Dirichlet character $\chi(n)=\left(\frac5n\right)$. 
\smallskip

The coefficient $c(m)$ in  \eqref{zeta}   gives the number of times the  natural number $m$ occurs as the norm of an ideal in $\mathbb{Z}[\tau]$ (i.e. the number of non-equivalent representations of $m$ by a quadratic form). Since
$\sum_n \frac1{n^s}\sum_r \frac{\chi(r)}{r^s}=\sum_n \sum_r\frac{\chi(r)}{(nr)^s}$,  we have  $$c(m)=\sum_{d\vert m} \chi(d)=\sum_{d\vert m} \left(\frac5d\right).$$

\smallskip
\noindent
Now, the exponents of $q$ on the left hand side of \eqref{lemma2_eq}  arise from   $-\vert j+n\sqrt5\vert=5n^2-j^2$ and the case of half-integers $-\vert \frac{2j+1}2+\frac{2n+1}2\sqrt5\vert=5n^2-j^2+5n-j+1$.  
 Finally, the ranges for the inner sums are directly justified by \cite[Lemma 3]{Andrews3}. This completes the proof. 
\end{proof}

\begin{remark}
  The Dirichlet series \eqref{zeta} has the following explicit expansion.
\begin{align*}
\zeta_{\mathbb{Q}(\tau)}(s)=\sum_{m\geq1} \frac{c(m)}{m^s}
&=\frac1{1-5^{-s}}\prod_{p\pm1\pmod 5}\frac1{(1-p^{-s})^2}\prod_{p\pm2\pmod 5}\frac1{1-p^{-2s}}  \\
&=\frac11+\frac1{4^s}+\frac1{5^s}+\frac1{9^s}+\frac2{11^s}+\cdots . 
\end{align*}
\end{remark}

\smallskip
\noindent
We now prove Theorem \ref{conj2}.

\begin{proof}[Proof of Theorem \ref{conj2}] Recall that we want to show that $$\sum_{n\geq1} \frac{(-1)^{n-1}  q^{\binom{n+1}2} (q;q)_n(q;q)_{n-1}}{(q;q)_{2n}} = \sum_{n\geq1} \frac{(\frac{5}n)\, q^n}{1-q^n}.$$
We will use a Bailey pair $(\alpha_n, \beta_n)$. Take $x=aq$ and $y\rightarrow\infty$ in \cite[eq'n (1.3)]{Slater} so that
$$\sum_{n\geq0} (-1)^na^nz^{-n}q^{\binom{n+1}2} (z;q)_n\beta_n
=\frac{(aq/z;q)_{\infty}}{(aq;q)_{\infty}}
\sum_{n\geq0} 
\frac{(-1)^nq^{\binom{n+1}2}a^nz^{-n}(z;q)_n\alpha_n}{(aq/z;q)_n}.$$
Now set $a=z=q$, and we obtain
$$\sum_{n\geq0} (-1)^n q^{\binom{n+1}2}(q;q)_n\beta_n
=(1-q)\sum_{n\geq0}(-1)^n q^{\binom{n+1}2}\alpha_n$$ 
with 
$$\beta_n=\sum_{r=0}^n \frac{\alpha_r}{(q;q)_{n-r}(q^2;q^2)_{n+r}}=
\sum_{r=0}^n \frac{(1-q)\alpha_r}{(q;q)_{n-r}(q;q)_{n+r+1}}= \sum_{r=0}^n \frac{\alpha'_r}{(q;q)_{n-r}(q;q)_{n+r+1}},$$
where $\alpha'_n=(1-q)\alpha_n$.  So,
$$\sum_{n\geq0} (-1)^n q^{\binom{n+1}2} (q;q)_n\beta_n
=\sum_{n\geq0} (-1)^n q^{\binom{n+1}2}\alpha'_n.$$
We take $\beta_0=0, \beta_n=\frac{(q;q)_{n-1}}{(q;q)_{2n}}$ to get
\begin{equation}\label{lhs-alpha'} \sum_{n\geq1} \frac{(-1)^{n-1} q^{\binom{n+1}2} (q;q)_n(q;q)_{n-1}}{(q;q)_{2n}}
=\sum_{n\geq1} (-1)^{n-1} q^{\binom{n+1}2} \alpha'_n.\end{equation}
Using the  inversion formula \cite[eq'n (4.1)]{Andrews4},  we obtain
$$\alpha'_n=(1-q^{2n+1})\sum_{j=1}^n\frac{(-1)^{n-j}q^{\binom{n-j}2}(q)_{n+j}(q)_{j-1}}{(q)_{n-j}(q)_{2j}}.$$
Lemma~\ref{lemma1} implies that  $\alpha'_n=(1-q^{2n+1}) aa(n)$. Thus, by Corollary~\ref{cor1},
\begin{equation}\label{alpha'}\alpha'_n=(1-q^{2n+1})aa(n)=(1-q^{2n+1}) \sum_{j=0}^n (-1)^j q^{nj-\binom{j+1}2} aa1(n-j).\end{equation}
From \eqref{lhs-alpha'} and \eqref{alpha'} we obtain 
\begin{align*} \sum_{n\geq1} \frac{(-1)^{n-1}q^{\binom{n+1}2} (q;q)_{n-1}(q;q)_n}{(q;q)_{2n}}
&=\sum_{n\geq0} (-1)^{n-1} q^{\binom{n+1}2}(1-q^{2n+1}) \sum_{j=0}^n (-1)^j q^{nj-\binom{j+1}2} aa1(n-j) \\
&=\sum_{n\geq0} (-1)^n q^{\binom{n+1}2} aa1(n) \sum_{j\geq0} (-1)^{j-1}q^{j^2+2jn}(1-q^{2n+2j+1}) \\
&= \sum_{n\geq0} (-1)^n q^{\binom{n+1}2} aa1(n)
\end{align*}
because the inner sum (in the last double sum) is identically $1$.

\smallskip
\noindent
Finally, using Corollary \ref{cor1}, we have
\begin{align*} \sum_{n\geq1} \frac{(-1)^{n-1} q^{\binom{n+1}2} (q;q)_{n-1}(q;q)_n}{(q;q)_{2n}} 
&=\sum_{n\geq0} q^{\binom{2n+1}2+3n^2-n}\sum_{j=-n+1}^nq^{-j^2}  \\
& + \sum_{n\geq0} q^{\binom{2n+2}2+3n^2+2n} \sum_{j=-n}^n q^{-j^2-j} \\
&=\sum_{n\geq0}\sum_{j=-n+1}^n   q^{5n^2-j^2} + \sum_{n\geq0}  \sum_{j=-n}^n q^{5n^2-j^2+5n-j+1}.
\end{align*}
 Applying Lemma~\ref{lemma2} completes the proof.
\end{proof}

We end this section with a result similar to that of Theorem \ref{conj2}.

\begin{theorem} \label{conj3} Suppose $\prod_{j\geq1} (1-q^j)=\sum_{m\geq0} a(m)\, q^m$. Then,
\begin{equation}\label{idconj3} \sum_{n\geq0} \frac{(-1)^n q^{\binom{n+1}2} (q;q)_n}{(q;q)_{2n}} = \sum_{m\geq0} (-1)^m\, \vert a(7m)\vert\, q^m.\end{equation}
\end{theorem}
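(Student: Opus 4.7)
Following the strategy of Theorem \ref{conj2}, my plan begins with the Bailey transform---Slater's identity with $a=z=q$---applied to the pair determined by $\beta_n = 1/(q;q)_{2n}$. This reduces the left-hand side to
\[
\sum_{n\geq 0} \frac{(-1)^n q^{\binom{n+1}{2}}(q;q)_n}{(q;q)_{2n}} \;=\; \sum_{n\geq 0} (-1)^n q^{\binom{n+1}{2}}\, \alpha'_n,
\]
where the Bailey inversion formula \cite[eq'n (4.1)]{Andrews4} gives
\[
\alpha'_n \;=\; (1-q^{2n+1}) \sum_{j=0}^n \frac{(-1)^{n-j} q^{\binom{n-j}{2}}(q;q)_{n+j}}{(q;q)_{n-j}(q;q)_{2j}}.
\]

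The next step is to obtain a closed form for $\alpha'_n$. Numerical computation supports $\alpha'_n = 0$ when $n\equiv 1\pmod 3$, and, writing $p(k):=k(3k-1)/2$ for the generalized pentagonal numbers,
\[
\alpha'_{3j} = (-1)^{\lfloor j/2\rfloor}\, q^{p(2j)}(1 - q^{6j+1}), \qquad \alpha'_{3j+2} = -(-1)^{\lfloor j/2\rfloor}\, q^{p(-(2j+1))}(1 - q^{6j+5}).
\]
I would prove these identities by the Wilf--Zeilberger method in parallel with Lemma \ref{lemma1}, running Zeilberger's algorithm within each residue class modulo $3$ to extract a $q$-holonomic recurrence satisfied by both sides, and then checking the base cases.

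With $\alpha'_n$ in hand, substituting back and using the exponent identities $\binom{3j+1}{2} + p(2j) = j(21j+1)/2$ and $\binom{3j+3}{2} + p(-(2j+1)) = (21j^2+29j+10)/2$, the left-hand side collapses---after bookkeeping of signs, which reduces to parity conditions on an integer index modulo $4$---to
\[
\sum_{j''\in\mathbb{Z}} (-q)^{j''(21j''-1)/2} \;+\; \sum_{j''\in\mathbb{Z}} (-q)^{(21j''^2+29j''+10)/2}.
\]
On the right-hand side, Euler's pentagonal number theorem forces $|a(7m)|=1$ exactly when $7m = k(3k-1)/2$ for some $k\in\mathbb{Z}$, equivalently $k\equiv 0$ or $5\pmod 7$. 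Setting $k=7j''$ produces the first theta sum and $k=7j''+5$ the second, with the $(-1)^m$ weights matching the $(-1)^{k(3k-1)/14}$ signs in each case; this completes the identification with $\sum_{m\geq 0}(-1)^m|a(7m)|q^m$.

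The main obstacle is the middle step: establishing the closed form for $\alpha'_n$. The oscillating $(-1)^{\lfloor j/2\rfloor}$ factor (minimal period $4$ in $j$), combined with the vanishing at $n\equiv 1\pmod 3$, suggests that no single standard WZ certificate will cover all $n$ simultaneously. A promising alternative is to recognize $(\alpha_n,\beta_n)$ as an entry in Slater's Bailey-pair list \cite{Slater}, from which the theta-series form of $\alpha'_n$ should follow by a suitable application of Jacobi's triple product identity.
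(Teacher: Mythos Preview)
Your approach is the paper's: the same Bailey pair $\beta_n=1/(q;q)_{2n}$, the same inversion to obtain $\alpha'_n$, and the same endpoint identification via Euler's pentagonal-number theorem with $k\equiv 0,5\pmod 7$. The only difference lies in how the closed form for $\alpha'_n$ is established. Rather than running WZ separately in each residue class modulo~$3$, the paper packages the closed form for all $n\not\equiv 1\pmod 3$ into a single expression and proves it by checking that both the inversion sum $h(n):=\alpha'_n/(1-q^{2n+1})$ and the proposed closed form satisfy one short $q$-recurrence in~$n$, then matching a few initial values. Your stated obstacle---the period-$4$ oscillation $(-1)^{\lfloor j/2\rfloor}$---is therefore not a genuine barrier: even if a step-$3$ recurrence with a fixed sign fails to close, the step-$6$ relation $h(n+6)+q^{8n+22}h(n)=0$ certainly does, and in any case a single run of Zeilberger's algorithm on the full sum $\sum_{j=0}^n(-1)^{n-j}q^{\binom{n-j}{2}}(q;q)_{n+j}/\bigl((q;q)_{n-j}(q;q)_{2j}\bigr)$ will output a recurrence of whatever order is required, which your closed form then visibly satisfies.
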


\begin{proof} 
Consider the Bailey pair $(A_n,B_n)$ defined by
$$B_n=\frac1{(q;q)_{2n}} \qquad \text{and} \qquad B_n=\sum_{r=0}^n \frac{A_r}{(q^2;q^2)_{n+r}(q;q)_{n-r}}.$$

This can be inverted to produce $A_0=1$ and for $n>0$
$$A_n=\frac{1-q^{2n+1}}{1-q}  \sum_{j=0}^n \frac{(-1)^{n-j} q^{\binom{n-j}2} (q;q)_{n+j}}{(q;q)_{n-j}(q;q)_{2j}}.$$
Also, for $n>0$, introduce
$$C_n:= 
\begin{cases}    0  &\text{if $n\equiv1\pmod 3$} \\
(-q)^{\frac{n(2n-1)}3} \left( \frac{1-q^{2n+1}}{1-q}\right)  &\text{if $n\not\equiv 1 \pmod 3$}.
\end{cases}$$ Verifying that $$ A_n\cdot \frac{1-q}{1-q^{2n+1}} \ \ \text{and}\ \ C_n \cdot \frac{1-q}{1-q^{2n+1}}$$
 satisfy the recurrence 
$f(n+3)+q^{4n+5}f(n)=0$ shows that $A_n=C_n$ for $n>0$. We omit the details.

\smallskip
\noindent
Take $x=q^2, y=q$ and $z\rightarrow\infty$ in \cite[eq'n (1.3)]{Slater} so that
\begin{align*}
\sum_{n\geq0} \frac{(-1)^n q^{\binom{n+1}2} (q;q)_n}{(q;q)_{2n}} 
&=(1-q)\sum_{n\geq0} (-1)^nq^{\binom{n+1}2} A_n  \\
&= \sum_{n\geq0}  \left\{ q^{\frac{(3n+2)(7n+5)}2}(1-q^{6n+5}) + q^{\frac{n(21n+1)}2}(1-q^{6n+1})\right\}.
\end{align*}
It remains to check that the above expression  coincides with the right-hand side of \eqref{idconj3}. Again, we omit the routine calculation. 
\end{proof}

\section{A conjecture of Andrews, Dixit, Schultz, and Yee and related results}

The Lambert series $f_2(q)$ and $f_3(q)$ of Section \ref{S1} are reminiscent of the expression in the following conjecture.

\begin{conjecture} \cite[p. 24]{Andrews2} \label{George_conj} The following is an odd function of $q$, $|q|<1$:
$$Y(q):=\sum_{m,n\geq1} \frac{ (-q)^{2mn+m}}{(1+q^n)(1-q^{2m-1})}.$$
\end{conjecture}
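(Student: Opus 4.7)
My approach begins with a reformulation that eliminates the mixed $(1+q^{\bullet})(1-q^{\bullet})$ structure. Expand $(1-q^{2m-1})^{-1}=\sum_{h\ge 0}q^{(2m-1)h}$ and then perform the inner $m$-summation by means of the elementary identity $\sum_{m\ge 1}(-1)^m x^m=-x/(1+x)$. This converts
$$Y(q)=\sum_{m,n\ge 1}\frac{(-1)^m q^{m(2n+1)}}{(1+q^n)(1-q^{2m-1})}$$
into
$$Y(q)=-\sum_{n\ge 1}\sum_{h\ge 0}\frac{q^{2n+h+1}}{(1+q^n)(1+q^{2n+2h+1})}.$$
Every denominator factor is now of the uniform shape $1+q^{\bullet}$, so the sign behavior under $q\mapsto -q$ becomes much more tractable than in the original mixed form. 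Substituting $\ell=2n+2h+1$ (an odd integer with $\ell\ge 2n+1$), and noting that $2n+h+1=n+(\ell+1)/2$, yields the symmetric double Lambert series
$$Y(q)=-\sum_{n\ge 1}\sum_{\substack{\ell\ge 2n+1\\ \ell\ \text{odd}}}\frac{q^{n+(\ell+1)/2}}{(1+q^n)(1+q^\ell)}.$$

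From here I would try to establish oddness by the combinatorial method of Section \ref{S1}: expand the two denominators further as $\sum_{i\ge 0}(-q^n)^i$ and $\sum_{j\ge 0}(-q^\ell)^j$, split the resulting quadruple sum by the parities of $n$ and of $(\ell+1)/2$, and seek a weight-preserving, sign-reversing involution on the quadruples $(n,\ell,i,j)$ that pairs every contribution with an even total exponent to another of opposite sign. A natural candidate is a Frobenius-symbol-style involution analogous to those underlying the combinatorial proof of $f_2(q)=f_3(q)$.

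Alternatively, I would try an operator-theoretic route, mimicking the second proof of Theorem \ref{Th-f1-f3}. The goal would be to identify a grading-type operator $\mathcal L_Y$ whose evaluation on the generalized $\eta$-function $1/\eta(\pmb{\alpha})$ at $\alpha_j=j$ with $q=e^{-1}$ equals $Y(q)/\eta$, and then to interpret $\mathcal L_Y(1/\eta)$ combinatorially as a sum $\sum_\lambda q^{|\lambda|}\kappa^Y_\lambda$ over partitions. Oddness would then follow from a parity-sensitive involution on partitions of \emph{even} size that is fixed-point free and reverses the sign of $\kappa^Y_\lambda$, forcing $\sum_{|\lambda|\text{ even}}\kappa^Y_\lambda$ to vanish coefficient by coefficient, while the corresponding statistic on partitions of odd size need not cancel.

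The principal obstacle, and the reason the conjecture has resisted proof, is that in the reformulation the two factors $(1+q^n)$ and $(1+q^\ell)$ live on different arithmetic scales: $n$ is arbitrary while $\ell$ is odd and strictly exceeds $2n$. This asymmetry defeats the naive conjugation bijections that were so effective for $f_2=f_3$, where the two denominator factors played symmetric roles. A successful argument will likely require either a more intricate combinatorial involution that reconciles these two ranges, a deeper Liouville-type identity than Lemma \ref{lemma_f1_f3}, or a modular reinterpretation of $Y(q)$ (as an object on $\Gamma_0(2)$, say) in which oddness in $q$ is forced by the transformation law.
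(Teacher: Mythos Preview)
This statement is labeled \emph{Conjecture} in the paper, and the authors say explicitly: ``While we are not able to prove the conjecture, we offer some ideas that are hopefully pointing in the right direction.'' So there is no proof in the paper to compare against; the relevant comparison is between your sketch and the paper's partial progress in Section~5.

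Your initial reformulation is correct and is exactly the content of the paper's Proposition~\ref{step1}. Expanding $(1-q^{2m-1})^{-1}$ geometrically, summing the resulting geometric series in $m$, and substituting $\ell=2n+2h+1$ (equivalently $k=(\ell+1)/2$) gives
\[
Y(q)=-\sum_{k\ge 2}\frac{q^{k}}{1+q^{2k-1}}\sum_{n=1}^{k-1}\frac{q^{n}}{1+q^{n}},
\]
which, after dividing by $-q$, is precisely the paper's formula for $\widetilde Y(q)$. So your first step is on firm ground and matches the paper.

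Your two proposed continuations also mirror the paper's own attempts. The operator-theoretic route you describe is carried out in Theorem~\ref{relY} and Lemmas~\ref{altlemma3}--\ref{altlemma4}: the authors introduce an operator $\mathcal T$ acting on $1/\widetilde\eta(\pmb\alpha)$ and obtain a partition expansion
\[
\prod_{j\ge 1}\frac1{1+q^j}\cdot \widetilde Y(q)
=\sum_{\lambda\in\mathcal P}(-1)^{\lambda_1}\sum_{\ell\ge 1}q^{|\lambda|-\ell}(\lambda_{2\ell-1}-\lambda_{2\ell})(\lambda_1-\lambda_\ell),
\]
but no sign-reversing involution on partitions of even size is found, and the argument stops there. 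Your modular-forms suggestion is likewise pursued for a \emph{sibling} series in Theorem~\ref{other-even}, where replacing $1+q^n$ by $1+q^{2n-1}$ allows a clean resolution via Eisenstein and Jacobi theta identities; that success does not transfer to $Y(q)$ itself.

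In short, your plan is not wrong, but it is not a proof: the gap you yourself flag --- the asymmetry between $n$ ranging over all positive integers and $\ell$ ranging over odd integers with $\ell>2n$ --- is precisely the obstruction the authors were unable to overcome, and nothing in your outline gets past it. Until a concrete involution, a Liouville-type identity, or a modular identification of $Y(q)$ is actually produced, the conjecture remains open.
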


While we are not able to prove the conjecture, we offer some ideas that are hopefully pointing in the right direction. 

\begin{proposition}
     \label{CA1} Define $\widetilde{Y}(q):=\frac{Y(q)}{-q}=\sum_{m,n\geq1}a(m,n)$, where  
$$a(m,n):=\frac{(-1)^{m-1}q^{2mn+m-1}}{(1+q^n)(1-q^{2m-1})}.$$ The coefficients of the $q$-series expansion of $a(m,n)$
are all either $-1$ or $0$ or $1$.
\end{proposition}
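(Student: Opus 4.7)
\emph{Plan.} My plan is to reduce the statement to a combinatorial claim about a signed lattice-point count and then exploit the fact that $2m-1$ is odd. First I would expand the two factors in the denominator as geometric series,
\[
\frac{1}{1+q^n}=\sum_{j\geq 0}(-1)^j q^{jn},\qquad \frac{1}{1-q^{2m-1}}=\sum_{k\geq 0}q^{k(2m-1)},
\]
so that the coefficient of $q^N$ in $a(m,n)$ becomes
\[
(-1)^{m-1}\sum_{\substack{j,k\geq 0 \\ jn+k(2m-1)=M}}(-1)^j,\qquad M:=N-(2mn+m-1).
\]
It therefore suffices to prove that, for any positive integers $n$ and $2m-1$ and any integer $M\geq 0$, the signed count
\[
\sigma(M):=\sum_{\substack{j,k\geq 0\\ jn+k(2m-1)=M}}(-1)^j
\]
lies in $\{-1,0,1\}$.

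Next I would perform a gcd reduction. Write $g:=\gcd(n,2m-1)$, $n=gn'$ and $2m-1=g(2m-1)'$, so that $\gcd(n',(2m-1)')=1$. If $g\nmid M$ the sum is empty and $\sigma(M)=0$; otherwise it counts nonnegative solutions $(j,k)$ of $jn'+k(2m-1)'=M/g$. Coprimality implies that the $j$-coordinates of these solutions form a single arithmetic progression
\[
j=j_0+t(2m-1)',\qquad t=0,1,\dots,T,
\]
where $j_0$ is the smallest nonnegative solution of $jn'\equiv M/g\pmod{(2m-1)'}$ and $T$ is the largest $t$ for which the corresponding $k$ remains $\geq 0$.

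The key (and essentially only non-mechanical) step is a parity observation. Since $2m-1$ is odd, every divisor of it is odd, so $(2m-1)'$ is odd. Consecutive $j$'s in the progression therefore differ by an odd integer, and $(-1)^j$ strictly alternates with $t$; hence
\[
\sigma(M)=(-1)^{j_0}\sum_{t=0}^{T}(-1)^t\in\{-1,0,1\},
\]
and multiplying by $(-1)^{m-1}$ yields the proposition. The potential failure point of the argument is exactly this parity observation: if one replaced $1-q^{2m-1}$ by $1-q^{2m}$, the common difference $(2m-1)'$ could be even and the alternation in $t$ would disappear. Beyond this divisibility remark, the whole argument is elementary geometric-series manipulation.
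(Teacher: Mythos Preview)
Your proof is correct and follows the same overall strategy as the paper: reduce by $g=\gcd(n,2m-1)$, use that $2m-1$ is odd to force the reduced modulus $(2m-1)'$ to be odd, and conclude that the coefficients lie in $\{-1,0,1\}$.

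The execution of the final step differs slightly. The paper rewrites $\dfrac{1}{1+q^n}=\dfrac{1-q^n}{1-q^{2n}}$, so that after the substitution $z=q^g$ one is studying $(1-z^a)\sum_{N\ge0}c_N z^N$ with $c_N$ the denumerant of $N$ with respect to the coprime pair $(2a,b)$; it then appeals to the Frobenius coin-exchange fact $c_N\in\{\lfloor N/(2ab)\rfloor,\lfloor N/(2ab)\rfloor+1\}$ to bound $|c_N-c_{N-a}|$. You instead keep the alternating expansion $\frac{1}{1+q^n}=\sum_j(-1)^jq^{jn}$ and observe directly that the admissible $j$'s form an arithmetic progression with odd common difference $(2m-1)'$, so the signs alternate and the partial sum telescopes to $\pm1$ or $0$. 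Your route is a little more self-contained (no external denumerant estimate is invoked), while the paper's phrasing makes the connection to the classical coin problem explicit; the underlying combinatorics is the same.
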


\begin{proof} Fix positive integers $m$ and $n$. Consider 
$$b(m,n):=\frac1{(1+q^n)(1-q^{2m-1})}.$$
Let $u:=\gcd(n,2m-1)$ and write $n=ua, 2m-1=ub$ with $\gcd(a,b)=1$. We make the substitution $z=q^u$. We want  to show that 
$$f(a,b):=\frac{1-z^a}{(1-z^{2a})(1-z^b)}=(1-z^a)\sum_{j,k\geq0}z^{2ak+jb}
=(1-z^a)\sum_{N\geq0} c_Nz^N$$
has only coefficients $0, 1$, or $-1$. It is well-known (via the \emph{Frobenius coin exchange problem}) that the largest $N$ for which $c_N=0$ is $2ab-2a-b$ while each coefficient is given by $c_N=\lfloor\frac{N}{2ab}\rfloor$ or $c_N=\lfloor\frac{N}{2ab}\rfloor+1$. 

\smallskip
\noindent
Since $a<2ab$, it follows that the coefficients of $z^N$ and $z^{N+a}$ differ at most by $1$. This completes the proof. 
\end{proof}

\begin{proposition} \label{step1} We have 
$$\widetilde Y(q)=\sum_{m,n\geq1} \frac{(-1)^{m-1} q^{2mn+m-1}}{(1+q^n)(1-q^{2m-1})}= \sum_{k\geq2} \frac{q^{k-1}}{1+q^{2k-1}} \sum_{n=1}^{k-1} \frac{q^n}{1+q^n}.$$
    \end{proposition}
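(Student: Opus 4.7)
The plan is to manipulate the left-hand side by expanding only one of its two denominators. Writing $(1-q^{2m-1})^{-1} = \sum_{i\geq 0}q^{(2m-1)i}$ while keeping $1/(1+q^n)$ intact yields
\begin{equation*}
\widetilde Y(q) = \sum_{m,n\geq 1}\sum_{i\geq 0}\frac{(-1)^{m-1}q^{2mn+m-1+(2m-1)i}}{1+q^n}.
\end{equation*}
The pivotal algebraic step is the rearrangement
\begin{equation*}
2mn + m - 1 + (2m-1)i \;=\; (2m-1)(n+i) + n + m - 1,
\end{equation*}
which invites the substitution $k := n+i$. The condition $i \geq 0$ becomes $k \geq n$, and interchanging the order so that $k$ is the outermost variable converts the triple sum into
\begin{equation*}
\widetilde Y(q) = \sum_{k\geq 1}\sum_{n=1}^{k}\frac{q^n}{1+q^n}\sum_{m\geq 1}(-1)^{m-1}q^{(2m-1)k+m-1}.
\end{equation*}

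The remaining step is a one-line geometric series evaluation: setting $m = j+1$, the innermost sum becomes $q^{k}\sum_{j\geq 0}(-q^{2k+1})^{j} = q^{k}/(1+q^{2k+1})$. Substituting and re-indexing $k \mapsto k-1$ (so that the outer range becomes $k \geq 2$) produces exactly the right-hand side of the proposition. The interchange of summation orders is legitimate because each term has $q$-degree at least $2mn + m - 1 \geq 2$, so any fixed coefficient of $q^{N}$ is fed by only finitely many triples $(m,n,i)$.

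The step demanding the most thought—rather than calculation—is the choice of substitution, not any genuinely hard estimate. One must recognise that expanding the odd-exponent factor $1/(1-q^{2m-1})$ (and not $1/(1+q^n)$) is what keeps the shape $q^n/(1+q^n)$ visible, matching the right-hand side, and that the apparently unrelated indices $n$ and $i$ should be bundled into a single variable $k = n+i$. Once these two book-keeping decisions are made, the remaining manipulation is automatic.
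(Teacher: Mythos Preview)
Your proof is correct and follows essentially the same route as the paper's: both expand $1/(1-q^{2m-1})$ as a geometric series, regroup the exponent so that the dependence on the expansion index and $n$ collapses to a single variable $k$, and then sum the remaining geometric series in $m$ to produce $q^{k}/(1+q^{2k+1})$. The paper performs the $m$-summation before the substitution $k=n+i$ whereas you do the substitution first, but this is merely a reordering of the same manipulation.
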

    \begin{proof}
We consider the following direct calculation 
\begin{align*} 
\sum_{m,n\geq1} \frac{(-1)^{m-1} q^{2mn+m-1}}{(1+q^n)(1-q^{2m-1})}
&= - \sum_{m,n\geq1} \frac1{1+q^n}\sum_{j\geq0} q^{-j-1} (-1)^m q^{(2n+2j+1)m} \\
&=- \sum_{n\geq1} \frac1{1+q^n} \sum_{j\geq0}  q^{-j-1} \frac{-q^{2n+2j+1}}{1+q^{2n+2j+1}}  \\
&=\sum_{n\geq1} \frac1{1+q^n}  \sum_{j\geq0} \frac{q^{2n+j}}{1+q^{2n+2j+1}} \\
&=\sum_{n,i\geq1}  \frac{q^{2n+i-1}}{(1+q^n)(1+q^{2n+2i-1})}  \\
&=\sum_{n\geq1} \sum_{k\geq n+1}  \frac{q^{n+k-1}}{(1+q^n)(1+q^{2k-1})}   \\
&=\sum_{k\geq2} \frac{q^{k-1}}{1+q^{2k-1}} \sum_{n=1}^{k-1} \frac{q^n}{1+q^n}.
\end{align*}
\end{proof}

Related to the function $\widetilde Y(q)$, we prove the following result.

\begin{theorem} \label{relY} If $q=e^{-1}$, then it holds that 
$$\prod_{j\geq1} \frac1{1+q^j} \left(\sum_{k\geq1} \frac{q^k}{1+q^k} \sum_{\ell>k} \frac{q^{\ell-1}}{1+q^{2\ell-1}}\right)
= \sum_{\lambda\in \mathcal{P}}  (-1)^{\lambda_1} 
 \left( \sum_{\ell\geq1} q^{\vert\lambda\vert-\ell} (\lambda_{2\ell-1}-\lambda_{2\ell})(\lambda_1-\lambda_{\ell}) \right).$$
\end{theorem}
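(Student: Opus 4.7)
My plan parallels the operator-theoretic proof of Theorem~\ref{Th-f1-f3} developed in Lemmas~\ref{altlemma1}--\ref{altlemma2}, but with the Euler $\eta$ replaced by its ``positive'' companion
$$E(\pmb{\alpha}):=\prod_{j\geq 1}\frac{1}{1+e^{-\alpha_j}}.$$
First, interchanging the order of summation in the LHS gives precisely $\widetilde Y(q)$ of Proposition~\ref{step1}, so the identity to be proved is
$$\prod_{j\geq 1}\frac{1}{1+q^j}\cdot\widetilde Y(q)\;=\;\sum_{\lambda\in\mathcal{P}}(-1)^{\lambda_1}\sum_{\ell\geq 1}q^{|\lambda|-\ell}(\lambda_{2\ell-1}-\lambda_{2\ell})(\lambda_1-\lambda_\ell).$$

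Setting $f_k:=(1+e^{-\alpha_k})^{-1}$, a direct logarithmic differentiation yields $\partial_{\alpha_k}E=(1-f_k)E$, and because $f_k$ depends only on $\alpha_k$, for any $j\neq k$ one has $\partial_{\alpha_j}\partial_{\alpha_k}E=(1-f_j)(1-f_k)E$. At the specialization $\alpha_i=i$ with $q=e^{-1}$ this evaluates to $1-f_k=q^k/(1+q^k)$. On the combinatorial side, expanding each factor of $E$ as a geometric series and using the telescoping identification $m_j=\lambda_j-\lambda_{j+1}$ from the proof of Lemma~\ref{altlemma1}, one finds
$$E(\pmb{\alpha})=\sum_{m_1,m_2,\ldots\geq 0}(-1)^{\sum_j m_j}e^{-\sum_j\alpha_j m_j}=\sum_{\lambda\in\mathcal{P}}(-1)^{\lambda_1}e^{-\sum_j\beta_j\lambda_j}\qquad(\beta_j:=\alpha_j-\alpha_{j-1}).$$

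The key step is to introduce the differential operator (with $q$ treated as a scalar)
$$\mathcal{N}:=\sum_{\ell\geq 2}q^{-\ell}\,\partial_{\alpha_{2\ell-1}}\sum_{j=1}^{\ell-1}\partial_{\alpha_j},$$
and to compute $\mathcal{N}(E)\big|_{\alpha_i=i}$ in two different ways. Analytically, the inequality $j\leq\ell-1<2\ell-1$ guarantees that the two indices are distinct, so $\mathcal{N}(E)=\sum_{\ell\geq 2}q^{-\ell}(1-f_{2\ell-1})\sum_{j=1}^{\ell-1}(1-f_j)\cdot E$; evaluated at $\alpha_i=i$ this becomes $\bigl(\sum_{\ell\geq 2}\frac{q^{\ell-1}}{1+q^{2\ell-1}}\sum_{j=1}^{\ell-1}\frac{q^j}{1+q^j}\bigr)\cdot\prod_j(1+q^j)^{-1}$, which is exactly the LHS. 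Combinatorially, applying $\mathcal{N}$ termwise to the $(m)$-series for $E$ extracts a factor $m_{2\ell-1}(m_1+\cdots+m_{\ell-1})$, and under $m_j=\lambda_j-\lambda_{j+1}$ this rewrites as $(\lambda_{2\ell-1}-\lambda_{2\ell})(\lambda_1-\lambda_\ell)$; the accumulated sign and weight produce $(-1)^{\lambda_1}q^{|\lambda|-\ell}$, giving precisely the RHS.

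The main obstacle is conceptual rather than computational: one must guess the correct operator $\mathcal{N}$. Its shape is forced by simultaneously requiring that $\partial_{\alpha_k}E=(1-f_k)E$ assemble the double sum $\widetilde Y(q)$ of Proposition~\ref{step1} on the analytic side, while $\partial_{\alpha_k}$ extract $m_k$ from $e^{-\sum\alpha_j m_j}$ to produce the partition differences on the combinatorial side. Once $\mathcal{N}$ is identified, the verification reduces to elementary differentiation of $(1+e^{-\alpha_k})^{-1}$ together with the telescoping bijection $m_j=\lambda_j-\lambda_{j+1}$; a minor technicality is the formal interchange of infinite sums and differentiations, but this is harmless when interpreted coefficient-by-coefficient in the $q$-series ring.
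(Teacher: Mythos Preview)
Your proposal is correct and follows essentially the same route as the paper. The paper introduces the function $1/\widetilde\eta(\pmb\alpha)=\prod_{j\geq1}(1+e^{-\alpha_j})^{-1}$ (your $E$) and the operator $\mathcal{T}=\sum_{k\geq1}\partial_{\alpha_k}\sum_{\ell>k}q^{-\ell}\partial_{\alpha_{2\ell-1}}$, which coincides with your $\mathcal{N}$ after interchanging the order of summation; it then evaluates $\mathcal{T}(1/\widetilde\eta)\big|_{\alpha_j=j}$ analytically (Lemma~\ref{altlemma3}) and combinatorially via the change of variables $\beta_j=\alpha_j-\alpha_{j-1}$ (Lemma~\ref{altlemma4}), matching your two computations. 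The only cosmetic difference is that the paper carries out the combinatorial step in the $\beta$-coordinates, using $\partial_{\alpha_k}=\partial_{\beta_k}-\partial_{\beta_{k+1}}$ and telescoping $\sum_{k=1}^{\ell-1}(\partial_{\beta_k}-\partial_{\beta_{k+1}})=\partial_{\beta_1}-\partial_{\beta_\ell}$, whereas you stay in the $\alpha$-coordinates, extract $m_k$ directly, and then telescope $m_1+\cdots+m_{\ell-1}=\lambda_1-\lambda_\ell$.
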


The identity of Theorem \ref{relY} follows from the lemmas below. 

Define the operator 
$$\mathcal{T}:=\sum_{k\geq1} \left[  \partial_{\alpha_k}\, \sum_{\ell>k}q^{-\ell} \partial_{\alpha_{2\ell-1}}\right],$$ 
and let it act on  the function 
$$\frac1{\widetilde{\eta}(\pmb{\alpha})} =\prod_{j\geq1} \frac1{1+e^{-\alpha_j}}, \qquad \text{where $\pmb{\alpha}=(\alpha_1,\alpha_2,\dots)$}. $$

\begin{lemma} \label{altlemma3} If $q=e^{-1}$, then it holds that
\begin{align*}   \left. \mathcal{T} \left(\frac1{\widetilde{\eta}}\right) \right\vert_{\substack{\alpha_j=j\\ j\geq1}}
& = \left.\frac1{\widetilde{\eta}(\pmb{\alpha})} \right\vert_{\substack{\alpha_j=j\\ j\geq1}}\cdot 
\sum_{k\geq1} \frac{q^k}{1+q^k} \sum_{\ell>k} \frac{q^{\ell-1}}{1+q^{2\ell-1}}
\end{align*}
\end{lemma}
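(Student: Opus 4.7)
The plan is to mimic the strategy used in the proof of Lemma~\ref{altlemma2}, replacing the geometric-series factors $f_k = (1-e^{-\alpha_k})^{-1}$ with their ``signed'' analogues $\widetilde{f}_k := (1+e^{-\alpha_k})^{-1}$ associated with the product $\widetilde{\eta}^{-1} = \prod_j \widetilde{f}_j$. A direct differentiation gives
$$\partial_{\alpha_k}\widetilde{f}_k = \frac{e^{-\alpha_k}}{(1+e^{-\alpha_k})^2} = (1-\widetilde{f}_k)\,\widetilde{f}_k,$$
and since $\widetilde{\eta}^{-1}$ is a product of $\widetilde{f}_j$'s, logarithmic differentiation yields the clean rule
$$\partial_{\alpha_k}\!\left(\widetilde{\eta}^{-1}\right) = (1-\widetilde{f}_k)\,\widetilde{\eta}^{-1}.$$

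Next I would apply $\mathcal{T}$ term-by-term: for each pair $(k,\ell)$ with $\ell>k$, first compute $\partial_{\alpha_{2\ell-1}}(\widetilde{\eta}^{-1}) = (1-\widetilde{f}_{2\ell-1})\widetilde{\eta}^{-1}$, and then apply $\partial_{\alpha_k}$. The crucial observation is that in the sum defining $\mathcal{T}$ the indices $k$ and $2\ell-1$ are always distinct: indeed, $\ell>k\geq 1$ forces $2\ell-1\geq 2k+1>k$. Therefore $\partial_{\alpha_k}$ only hits the factor $\widetilde{\eta}^{-1}$ and not $(1-\widetilde{f}_{2\ell-1})$, giving
$$\partial_{\alpha_k}\partial_{\alpha_{2\ell-1}}\!\left(\widetilde{\eta}^{-1}\right) = (1-\widetilde{f}_k)(1-\widetilde{f}_{2\ell-1})\,\widetilde{\eta}^{-1}.$$
No cross-term arises, so no Stirling-type expansion is needed as it might be if $k=2\ell-1$ were allowed.

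Summing and then evaluating at $\alpha_j = j$ with $q = e^{-1}$, I use $1-\widetilde{f}_k = q^k/(1+q^k)$ and combine the prefactor $q^{-\ell}$ with $(1-\widetilde{f}_{2\ell-1}) = q^{2\ell-1}/(1+q^{2\ell-1})$ to obtain
$$q^{-\ell}(1-\widetilde{f}_{2\ell-1}) = \frac{q^{\ell-1}}{1+q^{2\ell-1}}.$$
Pulling the common factor $\widetilde{\eta}^{-1}|_{\alpha_j = j}$ outside the double sum then reproduces precisely the right-hand side of the lemma.

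The only subtle point, and hence the ``main obstacle,'' is verifying the disjointness of indices so that the second derivative factors without correction terms; once this is in hand, the rest is a routine specialization. Apart from that, the argument is a direct parallel to Lemma~\ref{altlemma2}, simpler in fact because the operator $\mathcal{T}$ has no second-derivative $\partial_{\alpha_k}^2$ summand.
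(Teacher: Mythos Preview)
Your proposal is correct and follows essentially the same approach as the paper's own proof: introduce $g_k=(1+e^{-\alpha_k})^{-1}$, observe $\partial_{\alpha_k}g_k=g_k(1-g_k)$ so that $\partial_{\alpha_k}(\widetilde{\eta}^{-1})=(1-g_k)\widetilde{\eta}^{-1}$, substitute into $\mathcal{T}(\widetilde{\eta}^{-1})$, and evaluate at $\alpha_j=j$. Your explicit check that $k\neq 2\ell-1$ (so no diagonal second-derivative term arises) is a helpful detail the paper leaves implicit, but otherwise the arguments coincide.
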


\begin{proof} We rely on the following elementary facts: if we let $g_k:=(1+e^{-\alpha_k})^{-1}$, then $\partial_{\alpha_k}g_k=g_k(1-g_k)$.   
In particular, we obtain $\partial_{\alpha_k}(\widetilde{\eta}{-1})=(1-g_k) \widetilde{\eta}^{-1}$.
To finish the task, substitute these derivatives into $\sum_{k\geq1} \left[\partial_{\alpha_k}\, \sum_{\ell>k}q^{-\ell} \partial_{\alpha_{2\ell-1}}\right]\widetilde{\eta}^{-1}$ and then carry out the evaluations at $\alpha_j=j$ with $q=e^{-1}$.
\end{proof}

\begin{lemma} \label{altlemma4}  We have
\begin{align*}   
\left. \mathcal{T} \left(\frac1{\widetilde{\eta}}\right) \right\vert_{\substack{\alpha_j=j \\ j\geq1}}
& = \sum_{\lambda\in \mathcal{P}} (-1)^{\lambda_1} e^{-\vert\lambda\vert} 
 \left( \sum_{\ell\geq2} q^{-\ell} (\lambda_{2\ell-1}-\lambda_{2\ell})(\lambda_1-\lambda_{\ell}) \right).
\end{align*}
\end{lemma}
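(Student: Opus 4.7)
The plan is to mimic the strategy used in the proof of Lemma \ref{altlemma1}, only now with the appropriate sign twist coming from $1+e^{-\alpha_j}$ instead of $1-e^{-\alpha_j}$ in the denominator. First, I would expand
\[
\frac{1}{\widetilde{\eta}(\pmb{\alpha})}=\prod_{j\geq1}\sum_{m_j\geq0}(-1)^{m_j}e^{-m_j\alpha_j}=\sum_{m_1,m_2,\ldots\geq0}(-1)^{m_1+m_2+\cdots}e^{-\sum_{j\geq1}\alpha_jm_j},
\]
and then parametrize by a partition $\lambda$ via the standard correspondence $m_j=\lambda_j-\lambda_{j+1}$. Telescoping gives $\sum_j m_j=\lambda_1$, and Abel summation with $\beta_k:=\alpha_k-\alpha_{k-1}$ (with $\alpha_0=0$) gives $\sum_j\alpha_jm_j=\sum_j\beta_j\lambda_j$. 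Thus
\[
\frac{1}{\widetilde{\eta}(\pmb{\alpha})}=\sum_{\lambda\in\mathcal{P}}(-1)^{\lambda_1}\,e^{-\sum_{j\geq1}\beta_j\lambda_j}.
\]

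Next, I would rewrite the derivatives that appear in $\mathcal{T}$ in the $\beta$-coordinates: $\partial_{\alpha_k}=\partial_{\beta_k}-\partial_{\beta_{k+1}}$. Applied term by term to $e^{-\sum_j\beta_j\lambda_j}$, this yields
\[
\partial_{\alpha_k}\,e^{-\sum_j\beta_j\lambda_j}=-(\lambda_k-\lambda_{k+1})\,e^{-\sum_j\beta_j\lambda_j},\qquad\partial_{\alpha_{2\ell-1}}\,e^{-\sum_j\beta_j\lambda_j}=-(\lambda_{2\ell-1}-\lambda_{2\ell})\,e^{-\sum_j\beta_j\lambda_j},
\]
and since the coefficient produced by the inner $\partial_{\alpha_{2\ell-1}}$ does not depend on $\alpha_k$, the two derivatives commute cleanly and the two minus signs cancel.

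Applying $\mathcal{T}$ therefore produces
\[
\mathcal{T}\Big(\tfrac{1}{\widetilde{\eta}}\Big)=\sum_{\lambda\in\mathcal{P}}(-1)^{\lambda_1}e^{-\sum_j\beta_j\lambda_j}\sum_{k\geq1}(\lambda_k-\lambda_{k+1})\sum_{\ell>k}q^{-\ell}(\lambda_{2\ell-1}-\lambda_{2\ell}).
\]
Swapping the order of summation to put $\ell\geq 2$ outside and $1\leq k\leq\ell-1$ inside, the inner sum telescopes to $\sum_{k=1}^{\ell-1}(\lambda_k-\lambda_{k+1})=\lambda_1-\lambda_\ell$. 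Finally, specializing $\beta_j=1$ for all $j$ (equivalently, $\alpha_j=j$) turns $e^{-\sum_j\beta_j\lambda_j}$ into $e^{-|\lambda|}$, giving exactly the claimed formula.

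No step is genuinely difficult; the main thing to be careful about is the bookkeeping for the sign $(-1)^{\lambda_1}$, which comes out of the partial-fraction/geometric expansion of $(1+e^{-\alpha_j})^{-1}$ rather than of $(1-e^{-\alpha_j})^{-1}$, and the swap of summation indices that replaces the ``$\ell>k$'' ordering by the telescoping identity producing $\lambda_1-\lambda_\ell$. (The term $\ell=1$ is harmless since $\lambda_1-\lambda_1=0$, so extending to $\ell\geq1$ as in Theorem \ref{relY} costs nothing.)
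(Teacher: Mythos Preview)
Your proof is correct and follows essentially the same approach as the paper: expand $1/\widetilde{\eta}$ as a signed sum over partitions via $m_j=\lambda_j-\lambda_{j+1}$, pass to the $\beta$-coordinates so that $\partial_{\alpha_k}=\partial_{\beta_k}-\partial_{\beta_{k+1}}$, and telescope the inner $k$-sum to obtain the factor $\lambda_1-\lambda_\ell$. The only cosmetic difference is that the paper telescopes the operators $\partial_{\beta_k}-\partial_{\beta_{k+1}}$ before applying them, whereas you apply them first and then telescope the resulting scalars $\lambda_k-\lambda_{k+1}$; the two orderings are trivially equivalent.
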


\begin{proof} We proceed as in Lemma \ref{altlemma1} and express $\widetilde{\eta}$ as 
\begin{align*}  \frac1{\widetilde{\eta}(\pmb{\alpha})} & = \prod_{j\geq1} \sum_{m_j\geq0} (-1)^{m_j} e^{-\alpha_j m_j} = 
\sum_{m_1, m_2, \dots\geq0} (-1)^{m_1+m_2+\cdots} e^{-\sum_{j\geq1} \alpha_j m_j}.
\end{align*}
Based on the identification  $(m_1, m_2, \dots) \longleftrightarrow \lambda=(\lambda_1, \lambda_2, \dots)$ with $m_j=\lambda_j-\lambda_{j+1}$, we obtain
\begin{align*} \frac1{\widetilde{\eta}(\pmb{\alpha})} & = \sum_{\lambda\in\mathcal{P}} (-1)^{\lambda_1-\lambda_2+\lambda_2-\lambda_3+\cdots} e^{-\sum_{j\geq1} \alpha_j(\lambda_j-\lambda_{j+1})}
=  \sum_{\lambda\in\mathcal{P}} (-1)^{\lambda_1} e^{-\sum_{j\geq1} \beta_j \lambda_j},
\end{align*}
where $\beta_k=\alpha_k-\alpha_{k-1}$ (assuming $\alpha_0=0$). This results in $\partial_{\alpha_k} = \partial_{\beta_k}-\partial_{\beta_{k+1}}$.  Hence,
\begin{align*}  \mathcal{T} \left(\frac1{\widetilde{\eta}}\right)
 &= \sum_{k\geq1} \left[ (\partial_{\beta_k}-\partial_{\beta_{k+1}}) 
 \sum_{\ell>k}q^{-\ell} (\partial_{\beta_{2\ell-1}}-\partial_{\beta_{2\ell}}) \right] e^{-\sum_{j\geq1} \beta_j \lambda_j}  \\
 &= \sum_{\ell\geq2} \left[ q^{-\ell} (\partial_{\beta_{2\ell-1}}-\partial_{\beta_{2\ell}})  \sum_{k=1}^{\ell-1} (\partial_{\beta_k}-\partial_{\beta_{k+1}}) \right] e^{-\sum_{j\geq1} \beta_j \lambda_j}  \\
 &= \sum_{\ell\geq2} \left[ q^{-\ell} (\partial_{\beta_{2\ell-1}}-\partial_{\beta_{2\ell}})   (\partial_{\beta_1}-\partial_{\beta_{\ell}}) \right] 
\sum_{\lambda\in\mathcal{P}} (-1)^{\lambda_1} e^{-\sum_{j\geq1} \beta_j \lambda_j}  \\
&= \sum_{\lambda\in\mathcal{P}}  (-1)^{\lambda_1} \left( \sum_{\ell\geq2} q^{-\ell}
[(\lambda_{2\ell-1}-\lambda_{2\ell})(\lambda_1-\lambda_{\ell})] \right)  e^{-\sum_{j\geq1} \beta_j \lambda_j}.
\end{align*}
Consequently, evaluating the last result at $\beta_j=1$ for all $j$, produces the desired claim. 
\end{proof}
The proof of Theorem \ref{relY} follows from Lemmas ~\ref{altlemma3} and ~\ref{altlemma4}.

Next, we prove a result about the parity of a $q$-series  closely related to  $\widetilde Y(q)$.

\begin{theorem} \label{other-even} We have the identity
$$\sum_{m,n \geq1} \frac{(-q)^{2mn+m-1}}{(1+q^{2n-1})(1-q^{2m-1})} = \sum_{n\geq1} \frac{q^{4n-2}}{(1-q^{4n-2})^2}.$$
\end{theorem}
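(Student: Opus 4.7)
My plan is to parallel the manipulation carried out in Proposition \ref{step1}, then symmetrize and reduce the theorem to a classical two-squares/four-squares identity. First, by expanding $\frac{1}{1-q^{2m-1}}$ as a geometric series in the index $m$ and interchanging summations exactly as in the proof of Proposition \ref{step1} (but with the factor $\frac{1}{1+q^{2n-1}}$ in place of $\frac{1}{1+q^n}$), the left-hand side collapses into the product form $\frac{1}{q}\sum_{1\leq n<k}u_n u_k$, where $u_n := \frac{q^n}{1+q^{2n-1}}$. This bookkeeping step is routine and I do not expect trouble.

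Next, writing $\phi(q) := \sum_{n\geq 1}u_n$, the elementary identity $2\sum_{n<k}u_nu_k = \phi(q)^2 - \sum_n u_n^2$ converts the theorem into the equivalent assertion
$$\phi(q)^2 - \sum_{n\geq 1}\frac{q^{2n}}{(1+q^{2n-1})^2} \;=\; 2q\sum_{n\geq 1}\frac{q^{4n-2}}{(1-q^{4n-2})^2}.$$
A short algebraic calculation (using $(1-q^{2n-1})(1+q^{2n-1}) = 1-q^{4n-2}$ to combine the two squared denominators, the numerator becoming $q^{2n}+q^{6n-2}$) shows that
$$\frac{q^{2n}}{(1+q^{2n-1})^2} + \frac{2q^{4n-1}}{(1-q^{4n-2})^2} = \frac{q^{2n}(1+q^{4n-2})}{(1-q^{4n-2})^2} = q\sum_{r\text{ odd}\geq 1}r\,q^{r(2n-1)};$$
summing in $n$ and reindexing by the odd integer $M = r(2n-1)$ turns the right-hand side into $q\sum_{M\text{ odd}}\sigma(M)q^M$. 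Hence the theorem reduces to the single key identity $\phi(q)^2 = q\sum_{M\text{ odd}\geq 1}\sigma(M)q^M$.

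This reduced identity is the genuine obstacle. My approach is to expand $u_n = \sum_{a\geq 0}(-1)^a q^{n(2a+1)-a}$, observe that $(-1)^a = \chi_{-4}(2a+1)$ where $\chi_{-4}$ is the nontrivial character modulo $4$, and identify the coefficient of $q^N$ in $\phi(q)$ as $\sum_{d\mid 2N-1}\chi_{-4}(d) = \tfrac{1}{4}r_2(2N-1)$, a quarter of the two-squares function evaluated at $2N-1$. Squaring and gathering by the exponent $2K$, the coefficient of $q^{2K}$ in $\phi(q)^2$ becomes $\tfrac{1}{16}\sum_{M_1+M_2=4K-2,\,M_i\text{ odd}}r_2(M_1)r_2(M_2)$, which counts quadruples $(a,b,c,d)\in\mathbb{Z}^4$ with $a^2+b^2+c^2+d^2 = 4K-2$ subject to the condition that exactly one of $a,b$ and exactly one of $c,d$ is odd. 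Jacobi's four-squares theorem gives $r_4(4K-2) = 24\sigma(2K-1)$; since $4K-2\equiv 2\pmod 4$ forces exactly two odd entries in every representation, coordinate symmetry distributes these quadruples equally among the $\binom{4}{2}=6$ choices of odd positions, and the four ``crossing'' choices yield $\tfrac{4}{6}\cdot 24\sigma(2K-1) = 16\sigma(2K-1)$. Hence the coefficient of $q^{2K}$ in $\phi(q)^2$ equals $\sigma(2K-1)$, matching that of $q\sum_{M\text{ odd}}\sigma(M)q^M$, and the key identity (and therefore Theorem \ref{other-even}) follows.
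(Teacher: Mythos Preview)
Your proof is correct. The reduction of the left-hand side to $\frac{1}{q}\sum_{1\le n<k}u_nu_k=\frac{1}{2q}\bigl(\phi(q)^2-\sum_n u_n^2\bigr)$ is exactly equivalent to what the paper obtains from its Lemma~\ref{lemma11}: subtracting~\eqref{id2} from~\eqref{id1} and halving gives the same reduced identity~\eqref{altY}. So up to this point the two arguments coincide.

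Where you genuinely diverge is in proving the reduced identity. The paper argues via quasimodularity: it identifies each of the three pieces of~\eqref{altY} with an explicit linear combination of $E_2(q)$, $E_2(q^2)$, $E_2(q^4)$ (equivalently, with logarithmic derivatives of $\theta_2,\theta_3,\theta_4$) and then just compares coefficients. Your route is instead arithmetic--combinatorial: you first collapse the two ``correction'' terms into the Lambert-type series $q\sum_{M\text{ odd}}\sigma(M)q^M$, then recognise the coefficients of $\phi(q)$ as $\tfrac14 r_2(2N-1)$ via the character sum $\sum_{d\mid 2N-1}\chi_{-4}(d)$, and finally deduce $\phi(q)^2=q\sum_{M\text{ odd}}\sigma(M)q^M$ from Jacobi's four-squares theorem together with the elementary observation that every representation of $4K-2$ as four squares has exactly two odd entries, distributed uniformly over the $\binom{4}{2}$ positions by coordinate permutation. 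Both approaches ultimately rest on the same classical circle of ideas (theta relations and divisor sums), but yours avoids any explicit appeal to Eisenstein series or the relations $D\theta_i/\theta_i=\cdots$, trading them for the single input $r_4(4K-2)=24\,\sigma(2K-1)$ plus a clean parity count. The paper's version is shorter once the $E_2$-expressions are granted; yours is more self-contained and makes the arithmetic content (the link to $r_2$ and $r_4$) transparent.
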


To prove the theorem we use the following lemma.

\begin{lemma} \label{lemma11} We have the identities
\begin{align} 
\sum_{m,n \geq1} \frac{(-1)^{m-1}q^{2mn}[q^{m-1}+q^{-m}]}{(1+q^{2n-1})(1-q^{2m-1})} 
&  = q\left(\sum_{n\geq1} \frac{q^{n-1}}{1+q^{2n-1}} \right)^2, \label{id1} \\ 
\sum_{m,n \geq1} \frac{(-1)^{m-1}q^{2mn}[q^{-m}-q^{m-1}]}{(1+q^{2n-1})(1-q^{2m-1})} 
&=\sum_{n\geq1} \frac{q^{2n-1}}{(1+q^{2n-1})^2}.\label{id2}
\end{align}
\end{lemma}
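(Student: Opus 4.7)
The plan is to prove both identities of Lemma \ref{lemma11} in parallel, the engine being the algebraic factorization
\[
q^{m-1}+q^{-m} \;=\; q^{-m}(1+q^{2m-1}),
\qquad
q^{-m}-q^{m-1} \;=\; q^{-m}(1-q^{2m-1}),
\]
chosen so that the factor $(1\pm q^{2m-1})$ produced in the numerator interacts meaningfully with the $(1-q^{2m-1})$ sitting in the denominator of the summand.

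First I would dispose of \eqref{id2}, which is the easier of the two: there the factorization puts $(1-q^{2m-1})$ in the numerator, which cancels with the denominator, and the double sum collapses to
\[
\sum_{m,n\geq1}\frac{(-1)^{m-1}\,q^{m(2n-1)}}{1+q^{2n-1}}.
\]
Performing the inner $m$-sum via the elementary identity $\sum_{m\geq1}(-1)^{m-1}x^m = x/(1+x)$ with $x=q^{2n-1}$ immediately produces $\sum_{n\geq1} q^{2n-1}/(1+q^{2n-1})^2$, which is the desired right-hand side.

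For \eqref{id1}, the same factorization leaves behind the factor $\frac{1+q^{2m-1}}{1-q^{2m-1}}$, which I would expand as $1 + 2\sum_{j\geq1} q^{j(2m-1)}$. The constant-$1$ piece reproduces exactly the sum that was evaluated in the proof of \eqref{id2}, contributing $\sum_{n\geq1} q^{2n-1}/(1+q^{2n-1})^2$. The remaining piece, after summing in $m$ again through the alternating geometric series (now with $x=q^{2n+2j-1}$) followed by the change of variables $k=n+j$, should become
\[
2\sum_{n\geq1}\sum_{k>n}\frac{q^{n+k-1}}{(1+q^{2n-1})(1+q^{2k-1})}.
\]

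To close, I would expand the right-hand side $q\bigl(\sum_{n\geq1}q^{n-1}/(1+q^{2n-1})\bigr)^2$ of \eqref{id1} as a double sum and split it into its diagonal $n=k$ and off-diagonal $n\ne k$ parts; by the symmetry $n\leftrightarrow k$ these are precisely $\sum_{n\geq1}q^{2n-1}/(1+q^{2n-1})^2$ and $2\sum_{n<k}q^{n+k-1}/[(1+q^{2n-1})(1+q^{2k-1})]$, matching the two contributions assembled above. The whole argument is a chain of elementary rearrangements; the only real obstacle—and it is a minor one—is spotting the opening factorization, after which the lemma essentially unfolds on its own.
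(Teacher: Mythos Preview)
Your argument is correct. The proof in the paper proceeds differently: it treats the two monomials $q^{2mn+m-1}$ and $q^{2mn-m}$ in the numerator \emph{separately}, applying to each the method of Proposition~\ref{step1} (expand $1/(1-q^{2m-1})$ geometrically, then sum the resulting alternating series in $m$) to obtain
\[
\sum_{n\geq1}\frac{q^{n-1}}{1+q^{2n-1}}\sum_{j=1}^{n-1}\frac{q^j}{1+q^{2j-1}}
\qquad\text{and}\qquad
\sum_{n\geq1}\frac{q^{n-1}}{1+q^{2n-1}}\sum_{j=1}^{n}\frac{q^j}{1+q^{2j-1}},
\]
respectively; their difference is the diagonal sum $\sum_{j}q^{2j-1}/(1+q^{2j-1})^2$, giving \eqref{id2}, and their sum, after a symmetry swap of the summation indices, yields the square in \eqref{id1}. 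Your opening factorization $q^{-m}\mp q^{m-1}=q^{-m}(1\mp q^{2m-1})$ short-circuits this for \eqref{id2} by cancelling the denominator outright, and for \eqref{id1} it recycles the \eqref{id2} computation as the constant term of the expansion of $(1+q^{2m-1})/(1-q^{2m-1})$. The two arguments are close cousins---both ultimately rest on $\sum_{m\geq1}(-1)^{m-1}x^m=x/(1+x)$ and the diagonal/off-diagonal split of the square---but yours avoids deriving two auxiliary identities and is noticeably slicker for \eqref{id2}.
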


\begin{proof} 
The same analysis as in the proof of Proposition \ref{step1} yields
$$\sum_{m,n \geq1} \frac{(-1)^{m-1} q^{2mn+m-1}}{(1+q^{2n-1})(1-q^{2m-1})}
=\sum_{n\geq1} \frac{q^{n-1}}{1+q^{2n-1}} \sum_{j=1}^{n-1} \frac{q^j}{1+q^{2j-1}} $$
as well as
\begin{align*}
\sum_{m,n\geq1} \frac{ (-1)^{m-1} q^{2mn-m}}{(1+q^{2n-1})(1-q^{2m-1})}  
&= \sum_{n\geq1} \frac{q^{n-1}}{1+q^{2n-1}} \sum_{j=1}^n \frac{q^j}{1+q^{2j-1}} \\
&= \sum_{n\geq1} \frac{q^{n-1}}{1+q^{2n-1}} \sum_{n=1}^{n-1} \frac{q^j}{1+q^{2j-1}}  + \sum_{j\geq1} \frac{q^{2j-1}}{(1+q^{2j-1})^2}.
\end{align*}

Identities \eqref{id1} and \eqref{id2} follow from the two identities above. For   identity \eqref{id1} we also use $$\sum_{n\geq1} \frac{q^{n-1}}{1+q^{2n-1}} \sum_{j=1}^n \frac{q^j}{1+q^{2j-1}}
=\sum_{j\geq1} \frac{q^{j-1}}{1+q^{2j-1}} \sum_{n\geq j} \frac{q^n}{1+q^{2n-1}}.$$
\end{proof}

We will now prove Theorem \ref{other-even}. First, we recall certain instrumental functions in the theory of modular forms. The three Eisenstein series $E_2(q), E_4(q),$ and $E_6(q)$ are given by
\begin{align*}
E_2(q)& :=1-24\sum_{n=1}^{\infty} \sigma_1(n)q^n,\\ 
E_4(q)& :=1+240\sum_{n=1}^{\infty}\sigma_3(n)q^n,\\  
E_6(q)& :=1-504\sum_{n=1}^{\infty} \sigma_5(n)q^n,
\end{align*}
 where $\sigma_v(n):=\sum_{d\mid n}d^v.$ We also use the Jacobi theta series
$$\theta_2(q)=\sum_{m\in\mathbb{Z}+\frac12}q^{m^2}, \qquad \theta_3(q)=\sum_{m\in\mathbb{Z}}q^{m^2} \qquad \text{and} \qquad
\theta_4(q)=\sum_{m\in\mathbb{Z}} (-1)^mq^{m^2}.$$
They satisfy the identity $\theta_3^4(q)=\theta_2^4(q)+\theta_4^4(q)$. The Eisenstein series and the Jacobi series are  linked by the following relations:
\begin{align*} 
\frac{D(\theta_2)}{\theta_2}&=\frac{E_2-\theta_2^4+5\theta_3^4}{24}, \qquad
\frac{D(\theta_3)}{\theta_3}=\frac{E_2+5\theta_2^4-\theta_3^4}{24}, \qquad
\frac{D(\theta_4)}{\theta_4}=\frac{E_2(q) - \theta_2^4-\theta_3^4}{24},
\end{align*} where the operator $D$ is defined by $D=q\frac{d}{dq}$.

\begin{proof}[Proof of Theorem \ref{other-even}] Using \eqref{id1} and \eqref{id2}, our task reduces to showing that
\begin{equation}\label{altY} q\left(\sum_{n\geq1} \frac{q^{n-1}}{1+q^{2n-1}}\right)^2
=\sum_{n\geq1} \frac{q^{2n-1}}{(1+q^{2n-1})^2} + 2\sum_{n\geq1} \frac{q^{4n-2}}{(1-q^{4n-2})^2}.\end{equation}
However, \eqref{altY} is a consequence of modularity, i.e.
\begin{align*}
q\left(\sum_{n\geq1} \frac{q^{n-1}}{1+q^{2n-1}}\right)^2=\frac1{16} \theta_2(q)
&=-\frac1{24}[E_2(q)-3E_2(q^2)+2E_2(q^4)], \\
\sum_{n\geq1} \frac{q^{2n-1}}{(1+q^{2n-1})^2}  = \frac12\frac{D\theta_3(q)}{\theta_3(q)}
&=-\frac1{24}[E_2(q)-5E_2(q^2)+4E_2(q^4)], \\
2\sum_{n\geq1} \frac{q^{4n-2}}{(1-q^{4n-2})^2} = -\frac{D\theta_4(q^2)}{\theta_4(q^2)}
&=-\frac1{12}[E_2(q^2)-E_2(q^4)].  
\end{align*}
A direct comparison concludes the proof. 
\end{proof}

\smallskip
\noindent
The next result is immediate from Theorem \ref{other-even}.

\begin{corollary} We have that
\begin{align*} 
[q^{2r-1}]\sum_{m,n \geq1} \frac{(-q)^{2mn+m-1}}{(1+q^{2n-1})(1-q^{2m-1})}&=0, \\
[q^{2r}]\sum_{m,n \geq1} \frac{(-q)^{2mn+m-1}}{(1+q^{2n-1})(1-q^{2m-1})}
&=\sum_{\substack{d\, \vert \, r \\ \text{$\frac{r}d$  is odd}}} d.
\end{align*}
\end{corollary}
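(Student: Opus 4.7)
The plan is to invoke Theorem~\ref{other-even} and then simply read off the coefficients from the closed-form right-hand side. By Theorem~\ref{other-even}, the series in question equals
$$\sum_{n\geq 1}\frac{q^{4n-2}}{(1-q^{4n-2})^2}.$$
So the whole task is to extract $[q^{2r-1}]$ and $[q^{2r}]$ from this simpler single-variable Lambert-type series.

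First I would note that each summand $q^{4n-2}/(1-q^{4n-2})^2$ is a power series in $q^{4n-2}$, and $4n-2$ is even; hence every exponent appearing in the expansion is even. This immediately yields the first claim, namely that all coefficients of odd powers of $q$ vanish.

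Next, for the even coefficients I would use the elementary identity $x/(1-x)^2=\sum_{k\geq 1}kx^k$ with $x=q^{4n-2}$ to rewrite
$$\sum_{n\geq 1}\frac{q^{4n-2}}{(1-q^{4n-2})^2}=\sum_{n\geq 1}\sum_{k\geq 1}k\,q^{2k(2n-1)}.$$
The coefficient of $q^{2r}$ is therefore
$$\sum_{\substack{k,n\geq 1\\ k(2n-1)=r}}k.$$
Setting $d=k$, the constraint $k(2n-1)=r$ is equivalent to $d\mid r$ together with $r/d=2n-1$ being a positive odd integer, giving exactly $\sum_{d\mid r,\ r/d\text{ odd}} d$, which is the second claim.

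There is no real obstacle: once Theorem~\ref{other-even} is in hand, the argument reduces to the standard expansion $x/(1-x)^2=\sum kx^k$ and a trivial divisor-sum bookkeeping. The only thing to double-check is the parity reindexing (that $2n-1$ runs over all positive odd integers as $n\geq 1$), so that the condition "$r/d$ odd" is recovered without over- or undercounting.
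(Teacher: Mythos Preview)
Your argument is correct and matches the paper's approach: the corollary is stated there as immediate from Theorem~\ref{other-even}, and your expansion $x/(1-x)^2=\sum_{k\ge1}kx^k$ with $x=q^{4n-2}$ is exactly the routine verification that makes this immediacy explicit.
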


We also have two related results. 
\begin{proposition} \label{kick1} For a positive integer $r$, let $t(r)$ denote that number of ways to write $r$ as a sum of two triangular numbers. Then,  It holds that
$$[q^{2r-1}] \sum_{n\geq1} \frac{q^n}{1+q^{2n-1}}
=t(n).$$
\end{proposition}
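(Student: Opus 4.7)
The plan is to combine a direct geometric-series expansion with Jacobi's classical two-square theorem, and identify the coefficient arithmetically through the nontrivial Dirichlet character modulo~$4$.

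First, I would expand
$$\frac{q^n}{1+q^{2n-1}} = \sum_{k\geq 0} (-1)^k q^{n+(2n-1)k}$$
and reindex by $m=2k+1$ (an arbitrary positive odd integer), so that $(-1)^k=\chi(m)$ where $\chi$ is the nontrivial Dirichlet character modulo $4$, i.e.\ $\chi(m)=1$ for $m\equiv 1\pmod 4$ and $\chi(m)=-1$ for $m\equiv 3\pmod 4$. The exponent becomes $nm-(m-1)/2$. After interchanging summations and noting that $nm-(m-1)/2=N$ is equivalent to $m(2n-1)=2N-1$, and that every odd divisor $m$ of the odd number $2N-1$ yields a unique $n\geq 1$ via $(2N-1)/m = 2n-1$, I would read off
$$[q^{N}]\sum_{n\geq 1}\frac{q^n}{1+q^{2n-1}} = \sum_{d\mid 2N-1}\chi(d).$$
Specialising $N=2r-1$ gives $[q^{2r-1}]=\sum_{d\mid 4r-3}\chi(d)$.

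Next, I would connect this divisor sum to triangular representations via Jacobi's formula $r_2(M)=4\sum_{d\mid M}\chi(d)$ applied to $M=8(r-1)+2=8r-6$. Since $M\equiv 2\pmod 4$, any representation $M=a^2+b^2$ must have both $a,b$ odd; writing $a=2u+1$, $b=2v+1$ with $u,v\in\mathbb{Z}$ transforms the equation into $T_u+T_v=r-1$, where $T_k=k(k+1)/2$. Combined with the involution $u\leftrightarrow -u-1$ that fixes $T_u$, together with the sign choices $(\pm a,\pm b)$, this produces a $4$-to-$1$ correspondence, so $r_2(8r-6)=4\,t(r-1)$. Because $\chi$ vanishes on even arguments and $8r-6=2(4r-3)$, we get $\sum_{d\mid 8r-6}\chi(d)=\sum_{d\mid 4r-3}\chi(d)$. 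Combining the two halves yields
$$[q^{2r-1}]\sum_{n\geq 1}\frac{q^n}{1+q^{2n-1}} = t(r-1),$$
which is the intended content of the proposition.

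The only delicate point is the bookkeeping needed to produce the factor $4$ in $r_2(8r-6)=4\,t(r-1)$: one must carefully track the sign choices $(\pm a,\pm b)$ together with the $u\leftrightarrow -u-1$ symmetry of $T_u$. Apart from this piece of careful counting, the argument is routine once Jacobi's two-square theorem is invoked as a black box. An alternative, more self-contained route would avoid Jacobi and instead show directly that the generating function $\sum_{s\geq 0} t(s)\,q^{2s}$ (the even part of $q\cdot\psi(q^2)^2$, where $\psi(q)=\sum_{n\geq 0}q^{T_n}$) matches the even part of $q^{-1}\sum_{n\geq 1}q^n/(1+q^{2n-1})$ via a theta-series manipulation; this is essentially equivalent to the Jacobi route but trades an appeal to a classical theorem for an explicit identity between theta series.
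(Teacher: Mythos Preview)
Your argument is correct and complete: the geometric expansion yields $[q^{2r-1}]=\sum_{d\mid 4r-3}\chi(d)$, Jacobi's two-square theorem and the factor-of-$4$ bookkeeping give $r_2(8r-6)=4\,t(r-1)$, and the two halves match. (Your final answer $t(r-1)$ is indeed what the identity produces; the ``$t(n)$'' in the statement is a typo.)

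The paper takes a genuinely different route. It applies Ramanujan's ${}_1\psi_1$-summation with the specialisation $q\to q^2$, $z=q$, $a=-1/q$ to evaluate the Lambert series in closed form as
\[
\sum_{n\geq1}\frac{q^n}{1+q^{2n-1}}=q\,\frac{(q^4;q^4)_\infty^2}{(q^2;q^4)_\infty^2}=q\Bigl(\sum_{n\geq0}q^{2\binom{n+1}{2}}\Bigr)^{2},
\]
from which the coefficient of $q^{2r-1}$ is read off immediately as the number of ordered representations of $r-1$ as a sum of two triangular numbers. So the paper's proof is a single $q$-series identity (the heavy lifting hidden in ${}_1\psi_1$), whereas yours is arithmetic: you trade the ${}_1\psi_1$ machinery for Jacobi's two-square formula and the classical passage from sums of two squares of $8s+2$ to sums of two triangular numbers of $s$. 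Your approach has the advantage of making the divisor-sum structure $\sum_{d\mid 4r-3}\chi(d)$ explicit along the way; the paper's has the advantage of being a one-line closed-form evaluation. The ``alternative route'' you sketch at the end---proving the theta identity directly---is essentially what the paper does.
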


\begin{proof} This follows from  Ramanujan's ${}_1\psi_1$-summation 
$$\sum_{n\in\mathbb{Z}} \frac{z^n}{1-aq^n}
= \frac{(q;q)^2_{\infty} (az, \frac{q}{az}; q)_{\infty}} {(a,\frac{q}a,z, \frac{q}z; q)_{\infty}}.$$
Now take $q\rightarrow q^2$ then $z=q, a=-\frac1q$.  Thus
\begin{align*} \sum_{n\geq1} \frac{q^n}{1+q^{2n-1}} = \frac12 \sum_{n\in\mathbb{Z}} \frac{q^n}{1+q^{2n-1}} 
 & = \frac12 \frac{(q^2;q^2)^2_{\infty} (-1,-q^2;q^2)_{\infty}}{(-\frac1q, -q^3, q, q;q^2)_{\infty}}  \\
&= q\, \frac{(q^4;q^4)_{\infty}^2}{(q^2;q^4)_{\infty}^2}  \\
& = q\, \left(\sum_{n\geq0} q^{2\binom{n+1}2}\right)^2.
\end{align*}
The last equality follows from \cite[ eq'n (31.2)]{F}. 
\end{proof}

\begin{proposition} \label{kick2} For a positive integer $m$, we have that
$$\sum_{n\geq1} \frac{q^{2mn}}{1-q^n}
=\sum_{n\geq1} \frac{(-1)^{n-1}q^{(2m-1)n}q^{\binom{n+1}2}}{(1-q^n)(q^{2m};q^{2m})_n}.$$
\end{proposition}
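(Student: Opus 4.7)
The plan is to view the statement as a $q^{2m}$-generalization of Uchimura's classical divisor-series identity
$$\sum_{n\geq 1}\frac{(-1)^{n-1}q^{n(n+1)/2}}{(q;q)_n(1-q^n)}=\sum_{n\geq 1}\frac{q^n}{1-q^n}$$
and to adapt its analytic proof. First I would simplify the left-hand side by applying the classical Lambert reciprocity $\sum_{n\geq 1}x^n/(1-q^n)=\sum_{n\geq 1}xq^{n-1}/(1-xq^{n-1})$ with $x=q^{2m}$, obtaining
$$\sum_{n\geq 1}\frac{q^{2mn}}{1-q^n}=\sum_{k\geq 2m}\frac{q^k}{1-q^k};$$
equivalently, the coefficient of $q^N$ on the LHS is the number of divisors of $N$ that are at least $2m$.

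On the right-hand side I would expand $\frac{1}{1-q^n}=\sum_{k\geq 0}q^{kn}$, use the algebraic identity $(2m-1)n+\binom{n+1}{2}+kn=(2m+k)n+\binom{n}{2}$, and swap summations to obtain
$$\text{RHS}=\sum_{k\geq 0}\bigl[1-\Phi(q^{2m+k})\bigr],\qquad \Phi(y):=\sum_{n\geq 0}\frac{(-y)^{n}q^{\binom{n}{2}}}{(q^{2m};q^{2m})_n}.$$
In Uchimura's setting the base of the Pochhammer matches the base of the quadratic $q^{\binom{n}{2}}$, so Euler's identity gives $\Phi(y)=(y;q)_\infty$, and the standard argument $\sum_{k\geq 0}[1-(q^{k+1};q)_\infty]=\sum_{n\geq 1}q^n/(1-q^n)$ (via summing $\lambda_{\min}$ with sign over strict partitions) completes the proof. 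Here, however, the bases $q^{2m}$ and $q$ disagree, so Euler cannot be applied directly.

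My key idea for bridging this base mismatch is the cyclotomic factorization
$$(q^{2m};q^{2m})_n=\prod_{j=0}^{2m-1}(\omega^{j}q;q)_n,\qquad \omega=e^{\pi i/m},$$
which follows from $1-y^{2m}=\prod_{j=0}^{2m-1}(1-\omega^{j}y)$. Performing a partial-fraction decomposition across the $2m$-th roots of unity reduces each summand in $\Phi(y)$ to a combination of ordinary $q$-Pochhammer expressions to which Euler's identity does apply, so that $\Phi(y)$ is expressed as a weighted combination of products $(\omega^{j}y;q)_\infty$ for $j=0,\ldots,2m-1$. Summing $1-\Phi(q^{2m+k})$ over $k\geq 0$ and invoking the ``weighted Uchimura'' identity
$$\sum_{k\geq 0}\bigl[1-(wq^{k+1};q)_\infty\bigr]=\sum_{\substack{\lambda\;\text{strict}\\ \lambda\neq\emptyset}}(-1)^{\ell(\lambda)-1}w^{\ell(\lambda)}\lambda_{\min}\,q^{|\lambda|}$$
together with the cyclotomic orthogonality $\frac{1}{2m}\sum_{j=0}^{2m-1}\omega^{j\ell}=[\,2m\mid\ell\,]$, I expect the resulting expression to collapse to the tail Lambert series $\sum_{k\geq 2m}q^{k}/(1-q^{k})$, matching the simplified LHS.

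The main obstacle will be the cyclotomic partial-fraction step and the subsequent bookkeeping: the residues depend on $n$ through products like $\prod_{i=1}^{n-1}(1-\omega^{j-j'}q^{i})$ for pairs of root-of-unity indices $j,j'$, and verifying that the non-principal-root contributions ($j\geq 1$) cancel to leave exactly the truncated divisor series will be delicate. Should the cyclotomic route prove unmanageable, my back-up plan is to construct a Bailey pair relative to base $q^{2m}$ with $\beta_n=1/[(1-q^n)(q^{2m};q^{2m})_n]$, mirroring the Bailey-pair technique used in the proofs of Theorems~\ref{conj2} and~\ref{conj3}, and then identify the matching $\alpha_n$ as the coefficients of $\sum_{k\geq 2m}q^k/(1-q^k)$ via Andrews' inversion formula.
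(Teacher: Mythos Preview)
Your plan is built around a misprint in the displayed identity.  The Pochhammer in the denominator should be $(q^{2m};q)_n$, not $(q^{2m};q^{2m})_n$; the paper's own argument makes this clear, since it passes through the base-$q$ series ${}_2\phi_1(1/z,1/\tau;q^{2m};q,q^{2m}\tau z)$ and the $q$-Gauss sum, which produces exactly $(q^{2m};q)_n$ in the denominator.  With the printed $(q^{2m};q^{2m})_n$ the identity is simply false: for $m=1$ the left side has $[q^{8}]=3$ (the divisors of $8$ that are $\ge 2$), while on the right side the $n=1$ term contributes $4$ and the $n=2$ term $-q^{5}/\bigl((1-q^{2})^{2}(1-q^{4})\bigr)$ contributes $0$ to $q^{8}$, giving $4$.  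So the ``base mismatch'' you are trying to resolve with the cyclotomic factorization $(q^{2m};q^{2m})_n=\prod_{j=0}^{2m-1}(\omega^{j}q;q)_n$ is not there, and no amount of partial-fraction bookkeeping over the $2m$-th roots of unity can rescue a false statement.

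Once the typo is corrected, the proof is much shorter than either of your proposed routes.  The paper shows that both sides equal
\[
-\left.\frac{\partial}{\partial z}\right|_{z=1}\frac{(zq^{2m};q)_\infty}{(q^{2m};q)_\infty}.
\]
For the left side one writes $\sum_{n\ge1}q^{2mn}/(1-q^n)=-\partial_z|_{z=1}\sum_{n\ge0}(z;q)_n q^{2mn}/(q;q)_n$ and applies the $q$-binomial theorem.  For the right side one recognises the sum (after inserting the factor $1-1/z$) as $\lim_{\tau\to0}{}_2\phi_1(1/z,1/\tau;q^{2m};q,q^{2m}\tau z)$, evaluates it by the $q$-Gauss summation, and differentiates.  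Your Lambert-reciprocity rewriting of the left side as $\sum_{k\ge 2m}q^{k}/(1-q^{k})$ is correct and is a pleasant observation, but it is not needed; neither a cyclotomic decomposition nor a new Bailey pair is required.
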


\begin{proof} Using elementary fact that $\left.\frac{\partial}{\partial z}\right|_{z=1}(1-z)f(z)=-f(1)$ we obtain
\begin{align*}
\sum_{n\geq1}\frac{q^{2mn}}{1-q^n} & = -\left.\frac{\partial}{\partial z}\right|_{z=1} \sum_{n\geq0} \frac{q^{2mn}(z;q)_n}{(q;q)_n}
=  -\left.\frac{\partial}{\partial z}\right|_{z=1} \frac{(zq^{2m};q)_{\infty}}{(q^{2m};q)_{\infty}}.
\end{align*}
Since $\left.\frac{\partial}{\partial z}\right|_{z=1}(1-\frac1z)f(z)=f(1)$, we obtain
\begin{align*} \sum_{n\geq1} \frac{(-1)^{n-1}q^{(2m-1)n}q^{\binom{n+1}2}}{(1-q^n)(q^{2m};q^{2m})_n}
&= - \left.\frac{\partial}{\partial z}\right|_{z=1} \lim_{\tau\rightarrow 0} \,\,
{}_2\phi_1 \begin{pmatrix} \frac1z, \frac1{\tau}; q, q^{2m}\tau z   \\ q^{2m} \end{pmatrix}  \\
&= - \left.\frac{\partial}{\partial z}\right|_{z=1} \lim_{\tau\rightarrow 0} \,\,
\frac{(zq^{2m};q)_{\infty}(\tau q^{2m}; q)_{\infty}} {(q^{2m};q)_{\infty} (z\tau q^{2m}; q)_{\infty}}   \\
&= - \left.\frac{\partial}{\partial z}\right|_{z=1} \,\, \frac{(zq^{2m};q)_{\infty}}{(q^{2m};q)_{\infty}}.
\end{align*}
Thus the two expressions are identical.
\end{proof}

We conclude the section with two conjectures.

\begin{conjecture} \label{kick3} Let $a$ be a positive integer. Then, for each positive integer $n$, we have
$$[q^{n2^a}] \, \sum_{m,n\geq1} \frac{q^{mn2^a}}{(1+q^{n2^{a-1}})(1-q^{2m-1})}=\sigma_1(n),$$ where $\sigma_1(n)$ is the sum of divisors of $n$.
\end{conjecture}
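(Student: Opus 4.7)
My plan is to reduce the conjecture (which has a family index $a$) to a single identity and then attempt it by symmetrization in the spirit of Theorem~\ref{other-even}. Writing $S_a(q) := \sum_{m,n \geq 1} \frac{q^{mn 2^a}}{(1+q^{n 2^{a-1}})(1-q^{2m-1})}$ and expanding the two denominators as geometric series produces the summand $(-1)^i q^{mn 2^a + i n 2^{a-1} + j(2m-1)}$ with $i, j \geq 0$. Matching the exponent to $N \cdot 2^a$ yields $j(2m-1) = 2^{a-1}(2N - (2m+i)n)$; since $2m-1$ is odd, $2^{a-1}\mid j$, and writing $j = 2^{a-1} j'$ clears $2^{a-1}$ to give the $a$-independent equation $(2m+i)n + j'(2m-1) = 2N$. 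Hence
$$[q^{N\, 2^a}] S_a(q) = \sum_{\substack{m, n \geq 1,\ i, j' \geq 0 \\ (2m+i) n + j'(2m-1) = 2N}} (-1)^i$$
depends only on $N$, and it suffices to treat $a = 1$, i.e.\ to show $[q^{2N}] S_1(q) = \sigma_1(N)$.

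For $a = 1$ I would recast the claim as the closed-form identity
$$S_1(q) + S_1(-q) = 2 f(q^2), \qquad f(q) := \sum_{n \geq 1} \frac{nq^n}{1-q^n},$$
since $\frac{1}{2}(S_1(q)+S_1(-q))$ is exactly the even part of $S_1$, while $f(q^2) = \sum_n \sigma_1(n) q^{2n}$. Performing the substitution $q \to -q$ and splitting on the parity of $k$ gives
\begin{align*}
S_1(q) + S_1(-q) = \sum_{\substack{m \geq 1\\ k \text{ even}}} \frac{2 q^{2mk}}{(1+q^k)(1-q^{4m-2})} + \sum_{\substack{m \geq 1\\ k \text{ odd}}} \frac{2 q^{2mk}(1-q^{k+2m-1})}{(1-q^{2k})(1-q^{4m-2})},
\end{align*}
using $\frac{1}{1-q^{2m-1}} + \frac{1}{1+q^{2m-1}} = \frac{2}{1-q^{4m-2}}$ for the even-$k$ half and the identity $(1-q^k)(1+q^{2m-1}) + (1+q^k)(1-q^{2m-1}) = 2(1-q^{k+2m-1})$ for the odd-$k$ half.

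To establish that this sum equals $2f(q^2)$ I would follow one of two parallel routes. The first is a modular-forms route analogous to the proof of Theorem~\ref{other-even}: substitute $k = 2\ell$ in the even-$k$ piece and $k = 2\ell-1$ in the odd-$k$ piece, expand each summand into elementary Lambert series of the form $\sum_n \frac{nq^{cn}}{1-q^{cn}}$ and $\sum_n \frac{q^{cn}}{(1-q^{cn})^2}$, and reduce the identity to a linear relation among $E_2(q), E_2(q^2), E_2(q^4)$. The second is the operator route of Lemmas~\ref{altlemma1}--\ref{altlemma2}: design a grading-type operator $\mathcal{L}$ and a generalized eta-like function so that $\mathcal{L}(1/\eta)$ evaluates simultaneously to $(1/\eta)\cdot S_1(q)$ and to a partition sum admitting the conjugation symmetry $\lambda \leftrightarrow \lambda'$, pinning the result at $f(q^2)$. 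The step I expect to be hardest is closing either route: swapping summation order in $S_1$ produces the partial sums $\sum_{m=1}^{k}\frac{q^{2m-1}}{1-q^{2m-1}}$ whose behavior under $q \to -q$ resists a clean Eisenstein collapse, and the reduced combinatorial identity $\sum (-1)^s = \sigma_1(N)$ summed over $(s,\alpha,b',e)$ with $s\alpha + eb' = 2N$, $e \geq 1$ odd, $e \leq s-1$, calls for a sign-reversing involution that is not presently evident. Finding that involution, or the right operator in the spirit of Lemma~\ref{altlemma2}, is the missing ingredient.
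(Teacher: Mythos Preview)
The paper does not prove this statement: it is listed as Conjecture~\ref{kick3}, one of two open conjectures closing Section~5, with no accompanying argument. So there is no ``paper's own proof'' to compare against, and any complete proof you produce would be new.

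Your first step --- the reduction from general $a$ to $a=1$ --- is correct and is a genuine advance over what is in the paper. The geometric-series expansion and the divisibility argument $2^{a-1}\mid j$ are sound, and the resulting $a$-independent constraint $(2m+i)n + j'(2m-1)=2N$ does show that $[q^{N\,2^a}]S_a(q)=[q^{2N}]S_1(q)$ for every $a\geq 1$. This alone reduces the conjectural family to a single statement, which the paper does not observe.

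However, the remainder of your proposal is not a proof but a programme, and you explicitly flag the gap yourself. Your symmetrized expression for $S_1(q)+S_1(-q)$ is correctly derived, but neither the modular-forms route nor the operator route is carried to completion; you write that ``closing either route'' is the hard step and that the needed involution or operator ``is the missing ingredient.'' That is an honest assessment: the odd-$k$ piece introduces the cross-term $q^{k+2m-1}$ coupling $k$ and $m$, which blocks a direct collapse into a finite linear combination of $E_2(q^c)$'s, and the signed-count reformulation $\sum(-1)^i=\sigma_1(N)$ over $(2m+i)n+j'(2m-1)=2N$ does not have an obvious sign-reversing involution. Until one of these is supplied, the $a=1$ case --- and hence the conjecture --- remains open. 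Your reduction would be worth recording as a proposition in its own right, but the proposal as it stands does not settle Conjecture~\ref{kick3}.
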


\begin{conjecture} \label{kick4} If $r$ is a positive integer, then 
\begin{align*}
[q^{2r}] \sum_{m,n\geq1} \frac{q^{2mn}}{(1+q^{2n-1})(1-q^{2m-1})}&=[q^{2r}] \sum_{n\geq1} \frac{(n-1)q^n}{1+q^{2n-1}}. 
\end{align*} 
\end{conjecture}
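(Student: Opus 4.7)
The plan is to show that $L(q) - R(q)$ is an odd function of $q$, where $L(q)$ and $R(q)$ denote the left- and right-hand sides of Conjecture \ref{kick4}, respectively. Equivalently, the identity $L(q) + L(-q) = R(q) + R(-q)$, which lives entirely in the variable $Q := q^2$, must be established. A direct computation using $(1 + q^{2n-1})(1 - q^{2n-1}) = 1 - q^{4n-2}$ yields
$L(q) + L(-q) = 2\sum_{m,n \geq 1} \frac{Q^{mn}(1 - Q^{m+n-1})}{(1-Q^{2m-1})(1-Q^{2n-1})}$,
while splitting the sum defining $R$ by the parity of $n$ produces
$R(q) + R(-q) = 2\sum_{s \geq 1} \frac{(2s-1)\,Q^s}{1-Q^{4s-1}} - 4\sum_{s \geq 1} \frac{s\,Q^{3s+1}}{1-Q^{4s+1}}$.

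Next, I would extract $[q^{2r}]R$ from the second formula in closed form. Using that $4r - 1 \equiv 3 \pmod 4$ forces $\sum_{d \mid 4r-1} \chi_{-4}(d) = 0$ (the integer $4r-1$ is never a sum of two squares), the coefficient collapses to the divisor-twist
$[q^{2r}]R = -\tfrac{1}{2}\sum_{d \mid 4r-1} d\,\chi_{-4}(d)$,
where $\chi_{-4}$ is the nontrivial Dirichlet character modulo $4$, equivalently $\chi_{-4}(d) = (-1)^{(d-1)/2}$ for odd $d$. This is precisely a Fourier coefficient of the twisted weight-$2$ Eisenstein series attached to $\chi_{-4}$, suggesting the identification should live within the modular framework of Theorem \ref{other-even}.

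For the LHS, following the technique of Proposition \ref{step1}, expanding $(1-q^{2m-1})^{-1}$ as a geometric series and summing in $m$ first reorganizes
$L(q) = \sum_{k \geq 1} \frac{q^k}{1-q^{2k}} \sum_{n=1}^{k} \frac{q^n}{1+q^{2n-1}}$.
Combined with Proposition \ref{kick1}, which identifies $\sum_{n \geq 1} \frac{q^n}{1+q^{2n-1}} = q\,\psi(q^2)^2$ as a series supported only on odd powers of $q$, the inner sum decomposes as $q\,\psi(q^2)^2$ minus a tail. The target is then to match the even part of the resulting expression with the divisor-twist $-\tfrac{1}{2}\sum_{d \mid 4r-1} d\,\chi_{-4}(d)$, possibly by rewriting everything in terms of $E_2(q),\,E_2(q^2)$ and derivatives of $\theta_2,\theta_3,\theta_4$, exactly as in the proof of Theorem \ref{other-even}.

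The principal obstacle is bridging the combinatorial structure of the LHS double sum—which counts representations by the numerical semigroup $\{(2m-1)a + (2n-1)b : a, b \geq 0\}$ weighted with signs coming from the factor $1 - Q^{m+n-1}$—with the clean divisor-twist sum on the RHS. Whether the path proceeds via a combinatorial bijection in the spirit of the partition proofs of Section 3, a Bailey-pair transformation as in Theorem \ref{conj2}, or a modular-form identification on some $\Gamma_0(N)$, this final matching step is the heart of the argument and the chief reason the conjecture remains elusive.
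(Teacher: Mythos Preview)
The paper does not prove this statement: it is explicitly labeled a \emph{Conjecture} and appears at the very end of Section~5 with no accompanying argument. There is therefore no ``paper's own proof'' to compare against.

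Your proposal is likewise not a proof, as you yourself acknowledge in the final paragraph: the matching of the even part of $L(q)$ with the divisor-twist $-\tfrac12\sum_{d\mid 4r-1} d\,\chi_{-4}(d)$ is left open, and you correctly identify this as ``the heart of the argument.'' What you have written is a reduction, not a demonstration. That said, the intermediate computations are sound. The even-part extraction
\[
L(q)+L(-q)=2\sum_{m,n\geq1}\frac{Q^{mn}(1-Q^{m+n-1})}{(1-Q^{2m-1})(1-Q^{2n-1})},\qquad Q=q^2,
\]
is correct, as is the parity split of $R(q)+R(-q)$. Your derivation of $[q^{2r}]R=-\tfrac12\sum_{d\mid 4r-1}d\,\chi_{-4}(d)$ via the factorization $4r-1=(4s-1)(4j+1)$ or $(4s+1)(4j+3)$, together with the vanishing of $\sum_{d\mid 4r-1}\chi_{-4}(d)$ since $4r-1\equiv 3\pmod 4$, is a genuine simplification not recorded in the paper. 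The rearrangement $L(q)=\sum_{k\geq1}\frac{q^k}{1-q^{2k}}\sum_{n=1}^{k}\frac{q^n}{1+q^{2n-1}}$ also checks out and parallels Proposition~\ref{step1}.

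The gap is precisely where you place it: there is no mechanism offered---modular, combinatorial, or otherwise---to pass from the symmetrized double sum for $L(q)+L(-q)$ to the Eisenstein-type coefficient on the right. The hope that ``everything can be rewritten in terms of $E_2(q),E_2(q^2)$ and derivatives of $\theta_2,\theta_3,\theta_4$'' is not substantiated; unlike the situation in Theorem~\ref{other-even}, the double Lambert series here has no evident theta-product or Eisenstein expression. Your proposal therefore constitutes useful preparatory work on an open problem, but it does not close it, and the paper makes no claim to close it either.
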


\end{document}